\newtheorem{theorem}{Theorem}[section]
\newtheorem{lemma}[theorem]{Lemma}
\newtheorem{corollary}[theorem]{Corollary}
\newtheorem{proposition}[theorem]{Proposition}
\newtheorem{definition}[theorem]{Definition}
\newtheorem{example}[theorem]{Example}
\newcommand{\LG}[1]{L_{#1}}
\newcommand{\bool}[1]{\mathfrak{b}(#1)}
\newcommand{\B}[2]{\mathcal{B}_{#1,#2}}
\newcommand{\TDN}{\mathcal{T}_{d,n}}
\newcommand{\M}{\mathcal{M}}
\newcommand{\T}{\mathcal{T}}
\newcommand{\Z}{\mathbb{Z}}
\newcommand{\Q}{\mathbb{Q}}
\newcommand{\R}{\mathbb{R}}
\newcommand{\RP}{\mathbb{RP}}
\newcommand{\TP}{\mathbb{TP}}
\newcommand{\tconv}{\mathrm{tconv}}
\newcommand{\leftpartial}{p}
\newcommand{\negone}[1]{\epsilon^{#1}}
\newcommand{\lefttemp}{r}
\newcommand{\rightpartial}{q}
\newcommand{\righttemp}{s}
\newcommand{\tensor}{\otimes}
\begin{document}
\author{Axel Hultman}
\address{Department of Mathematics, KTH-Royal Institute of Technology,
  SE-100 44, Stockholm, Sweden.}
\email{axel@math.kth.se}
\author{Jakob Jonsson}
\address{Department of Mathematics, KTH-Royal Institute of Technology,
  SE-100 44, Stockholm, Sweden.}
\email{jakobj@math.kth.se}

  \title[Matrices of Barvinok rank $2$]{The topology of the space of
  matrices of Barvinok rank two}

\begin{abstract}
  The Barvinok rank of a $d \times n$ matrix is the minimum number of
  points in $\mathbb{R}^d$ such that the tropical convex hull of the
  points contains all columns of the matrix. The concept originated in
  work by Barvinok and others on the travelling salesman problem.
  Our object of study is the space of real $d \times n$ matrices of
  Barvinok rank two. Let $B_{d,n}$ denote this space modulo 
  rescaling and translation.
  We show that $B_{d,n}$ is a manifold, thereby settling a
  conjecture due to Develin. In fact, $B_{d,n}$ is homeomorphic to the 
  quotient of the product of spheres $S^{d-2} \times S^{n-2}$ under
  the involution which sends each point to its antipode simultaneously in both
  components.
  In addition, using discrete Morse theory, we compute the integral
  homology of $B_{d,n}$. Assuming $d \ge n$, for odd $d$ the homology
  turns out to be 
  isomorphic to that of $S^{d-2} \times \mathbb{RP}^{n-2}$. This
  is true also for even $d$ up to degree $d-3$, but the two cases
  differ from degree $d-2$ and up. 
  The homology computation straightforwardly extends to more general
  complexes of the form 
  $(S^{d-2} \times X)/\mathbb{Z}_2$, where $X$ is a finite cell
  complex of dimension at most $d-2$ admitting a free
  $\Z_2$-action.
\end{abstract}

\maketitle

\section{Introduction}
In the {\em tropical semiring}
$(\R,\odot,\oplus)$ one defines ``multiplication'' and
``addition'' by $a\odot b = a+b$ and $a\oplus b =
\min(a,b)$, respectively. Real $n$-space $\R^n$ has the natural structure of a
semimodule over the tropical semiring so that one obtains a theory of
``tropical geometry''.\footnote{Some authors require of a semiring the
  existence of an additively neutral element, which the tropical
  semiring lacks as we have defined it. The issue is of no importance
  to us, but could be rectified by incorporating an infinity
  element.}

Given any classical geometric notion, one may try to construct reasonable
analogues in tropical geometry and study their properties. This has
been a very lively direction of research over recent years. A brief
introduction is \cite{Mikhalkin}. For more background
the reader may e.g.\ consult the extensive list of references
appearing in \cite{Litvinov}.  

Consider the classical concept of $n$ points on a line in $\R^d$. Such
point sets are characterized by the property that their convex hull is
at most one-dimensional. An equivalent characterization is that all points lie
in the convex hull of at most two of the points. In tropical geometry, there
is a natural notion of {\em tropical convex hull} which leads to obvious
analogues of the above characterizations. However, the two analogues
differ; the latter is more restrictive. Considering the
points as columns of matrices, the former 
situation deals with matrices of {\em tropical rank}
at most $2$, whereas the latter leads one to consider matrices of {\em
  Barvinok rank} at most $2$. Various tropically motivated definitions
of rank are discussed in \cite{DSS}.

The concept of Barvinok rank has found applications in optimization
theory. Motivating the nomenclature, Barvinok et al.\ \cite{BFJTWW}
showed that the maximum version of the travelling salesman problem can
be solved in polynomial time if the Barvinok rank of the distance
matrix is fixed (with $\oplus$ denoting $\max$ rather than $\min$). 

Let $\M(d,n)$ denote the set of $d\times n$ matrices with real
entries. In this paper, we are interested in the space of matrices
$M\in \M(d,n)$ with Barvinok rank $2$, corresponding to sets of $n$
marked points in 
$\R^d$ whose tropical convex hull is generated by two of the
points. The space contains some topologically less interesting
features in that it 
is invariant under rescaling and translation. Taking the quotient by the
equivalence relation generated by these operations, one obtains a space
$B_{d,n}$ which is our object of study. Develin \cite{Develin}
conjectured that $B_{d,n}$ is a manifold and that its homology ``does
not increase in complexity as $n$ gets large''. He confirmed the
conjecture for $d=3$ and computed the integral homology in a few more
accessible cases.

Our main results are summarized in the following two theorems:
\begin{theorem} \label{th:manifold}
The space $B_{d,n}$ is homeomorphic to $(S^{d-2}\times S^{n-2})/\Z_2$, 
the quotient of the product of two spheres under the involution which
sends each point to its antipode simultaneously in both components.
\end{theorem}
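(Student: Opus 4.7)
The strategy is to exhibit an explicit continuous map $\Phi:S^{d-2}\times S^{n-2}\to B_{d,n}$, verify it factors through the diagonal $\Z_2$-antipode, and upgrade the induced continuous bijection to a homeomorphism using that the source is compact and $B_{d,n}$ is Hausdorff. Realise $S^{d-2}\subset\R^d$ as the unit sphere in the sum-zero hyperplane, and $S^{n-2}\subset\R^n$ analogously. For $(v,w)\in S^{d-2}\times S^{n-2}$, set
\[
M(v,w)_{ij}\;=\;\min\bigl(v_i+w_j,\,-v_i-w_j\bigr)\;=\;-|v_i+w_j|.
\]
Column $j$ of $M(v,w)$ is the entrywise minimum of the tropical scalar multiples $v+w_j\mathbf{1}$ and $-v-w_j\mathbf{1}$, so it lies in the tropical convex hull of $v,-v\in\R^d$; hence $M(v,w)$ has Barvinok rank at most $2$. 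Since $M(-v,-w)=M(v,w)$ identically as matrices, the map descends to $\bar\Phi:(S^{d-2}\times S^{n-2})/\Z_2\to B_{d,n}$.

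For bijectivity, start from a factorisation $M_{ij}=\min(p_i+\alpha_j,\,q_i+\beta_j)$ of an arbitrary class and apply the identity $\min(a,b)=\tfrac{a+b}{2}-\tfrac{|a-b|}{2}$ to write
\[
M_{ij}\;=\;\tfrac{p_i+q_i+\alpha_j+\beta_j}{2}\;-\;\tfrac{1}{2}\,\bigl|(p_i-q_i)+(\alpha_j-\beta_j)\bigr|.
\]
The first summand is separable in $(i,j)$ and hence absorbed by rescaling and translation, so every class admits a representative of the form $-\tfrac12|u_i+y_j|$ with $u=p-q$ and $y=\alpha-\beta$. The gauge in the factorisation together with the remaining rescaling freedom allows one to normalise to $u\in S^{d-2}$ and $y\in S^{n-2}$, giving surjectivity of $\bar\Phi$. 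For injectivity modulo $\Z_2$, one argues that if $-|v+w|$ and $-|v'+w'|$ differ by a separable matrix $(r_i+s_j)_{ij}$, then $(v',w')=\pm(v,w)$: the sign pattern of $v_i+w_j$ up to a global flip is an invariant of the equivalence class, and combined with the sum-zero and unit-norm normalisations it determines $(v,w)$ up to the diagonal antipode.

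The main obstacle is the injectivity analysis. The piecewise-linear behaviour of $x\mapsto|x|$ at $x=0$ threatens spurious equivalences along the bend locus $\{v_i+w_j=0\}$, so a careful sign-pattern argument is needed to exclude them. A secondary technical point is verifying that the rescaling and translation freedoms really do suffice to reach the canonical form $-|v+w|$ on the prescribed product of spheres, which is also what makes the dimension count $d+n-4$ match on both sides.
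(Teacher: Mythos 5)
Your approach is genuinely different from the paper's. The paper works entirely combinatorially: it uses Develin's tree model $\B{d}{n}$ of $B_{d,n}$, identifies $\B{d}{n}$ with $\Delta(\bool{P}\times\bool{Q})/\Z_2$ via balanced compositions, and then invokes the standard facts that $\Delta(\Pi\times\Sigma)$ subdivides $\Delta(\Pi)\times\Delta(\Sigma)$ and that $\Delta(\bool{S})\cong S^{|S|-2}$ with complementation acting as the antipode. Your route instead builds an explicit PL map, resting on the elegant identity $\min(a,b)=\tfrac{a+b}{2}-\tfrac{|a-b|}{2}$ to reduce every Barvinok rank-$2$ matrix to the canonical form $-|u_i+y_j|$ modulo separable matrices; the compact-to-Hausdorff trick to upgrade a bijection to a homeomorphism is correct and standard. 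This is an attractive idea, but it is not a finished proof.

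The most concrete gap is in the surjectivity step, and it is more than a ``secondary technical point.'' The residual gauge on $(u,y)$ is only one-dimensional (the common shift $u\mapsto u+c$, $y\mapsto y-c$; the separate gauges $c_1,c_2$ on the factorisation act through their difference $c_1-c_2$ only), and rescaling adds one more. That is two degrees of freedom against the four normalisations $\sum u_i=0$, $\sum y_j=0$, $\|u\|=1$, $\|y\|=1$, so ``gauge plus rescaling'' cannot by itself reach the prescribed product of spheres. What actually supplies the missing two degrees of freedom is the \emph{non-uniqueness of the factorisation} itself: whenever $-u_i$ lies outside the interval $[\min_j y_j,\max_j y_j]$ (so that the $i$-th row of $|u_i+y_j|$ is affine in $j$), that $u_i$ can be moved freely without changing the matrix modulo separable terms, and likewise for extremal $y_j$. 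This is exactly the piecewise-linear sign-pattern phenomenon you flag as a threat to injectivity; in fact it is indispensable to surjectivity as well, and without it the dimension count $d+n-4$ does not come out. A correct version of the argument has to show that this slack always suffices to land on $S^{d-2}\times S^{n-2}$, and simultaneously that once both sphere normalisations hold, the remaining ambiguity in $(u,y)$ is exactly the simultaneous sign flip. Neither is established in what you wrote, and the second reduces to the injectivity analysis you yourself defer. Until those two points are nailed down, what you have is a plausible construction of a continuous map, not a proof that it is a homeomorphism.
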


\begin{theorem}\label{th:homology}
The reduced integral homology groups of $B_{d,n}$ are given by
\[
\widetilde{H}_i(B_{d,n};\Z) \cong \Z^{f(i)}\oplus \Z_2^{t(i)},
\]
where
\[
f(i)=
\begin{cases}
2 & \text{if $i+2=d=n$ and $i$ is odd,}\\
1 & \text{if $i+2=n\neq d$ and $i$ is odd,}\\
1 & \text{if $i+2=d\neq n$ and $i$ is odd,}\\
1 & \text{if $i=d+n-4$ and $i$ is even,}\\
0 & \text{otherwise.}  
\end{cases}
\]
and
\[
t(i) = 
\begin{cases}
1 & \text{if $1\le i \le \min\{d,n\}-3$ and $i$ is odd,}\\
1 & \text{if $\max\{d,n\}-2\le i \le d+n-5$ and $i$ is even,}\\
0 & \text{otherwise,}  
\end{cases}
\]
\end{theorem}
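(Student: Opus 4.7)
The plan is to invoke Theorem~\ref{th:manifold} to replace $B_{d,n}$ with $(S^{d-2}\times S^{n-2})/\Z_2$ and then apply the Leray--Serre spectral sequence. Since the diagonal $\Z_2$-action is free on each factor, projection to the second factor descends to a fibre bundle
\[
S^{d-2} \longrightarrow B_{d,n} \longrightarrow \RP^{n-2}
\]
whose monodromy around a generator of $\pi_1(\RP^{n-2})$ acts on the fibre as the antipodal map of $S^{d-2}$. By the symmetry of $B_{d,n}$ in $d$ and $n$, I may assume $d\ge n$ throughout.

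The associated homology spectral sequence has $E^2$-page concentrated in the two rows $q=0$ and $q=d-2$,
\[
E^2_{p,0} = H_p(\RP^{n-2};\Z),\qquad E^2_{p,d-2} = H_p(\RP^{n-2};\Z_{\epsilon}),
\]
where the local coefficient system is trivial when $\epsilon := (-1)^{d-1}=+1$ (that is, $d$ odd) and is the orientation system otherwise. Both groups are standard cellular computations: for $d$ odd the two rows agree with $H_*(\RP^{n-2};\Z)$, while for $d$ even the upper row carries $\Z_2$'s in the \emph{even} positive degrees together with a free $\Z$ on top precisely when $n$ is even. The only conceivably nonzero higher differential is $d_{d-1}\colon E^{d-1}_{p,0}\to E^{d-1}_{p-d+1,d-2}$; its source vanishes for $p>n-2$ while its target requires $p\ge d-1$, and the hypothesis $d\ge n$ forces $n-2<d-1$, so $d_{d-1}=0$ and $E^\infty = E^2$.

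Each $H_k(B_{d,n})$ then fits in a short exact sequence
\[
0 \to H_{k-d+2}(\RP^{n-2};\Z_{\epsilon}) \to H_k(B_{d,n}) \to H_k(\RP^{n-2};\Z) \to 0,
\]
and the remaining task is to check that every such extension splits and that the resulting $\Z$'s and $\Z_2$'s land in exactly the degrees prescribed by $f(i)$ and $t(i)$. A parity check shows that two copies of $\Z_2$ never coexist on both sides: for $d$ odd the torsion in the two rows lives in degrees of opposite parity, while for $d$ even the two rows host torsion in the same parity but on disjoint index sets. The main obstacle I expect is purely bookkeeping---splitting cleanly into the subcases dictated by the parity of $d$ and by whether $d=n$ (the latter being where the doubled class $\Z^2$ in degree $d-2=n-2$ appears)---and matching the resulting list of groups with the stated formulas term by term. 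A natural alternative, closer in spirit to the generalization to complexes $(S^{d-2}\times X)/\Z_2$ mentioned in the abstract, would be to lift a cell structure from the product quotient and apply discrete Morse theory directly, trading the extension bookkeeping for the construction of an acyclic matching.
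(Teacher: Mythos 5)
Your route is genuinely different from the paper's. The paper works entirely at the chain level, applying an algebraic form of discrete Morse theory to the tensor product of the two hemispherical chain complexes modulo the $\Z_2$-identification (the key technical lemmas identify a Morse complex that splits as $\hat{\sf U}^{(0)}\oplus\hat{\sf U}^{(D)}$, reducing everything to the homology of ${\sf W}^+$ and ${\sf W}^-$, which by Lemma~\ref{lem:proj} are the cellular complexes for $\RP^{N}$ with untwisted and twisted coefficients). Your Serre spectral sequence for $S^{d-2}\to B_{d,n}\to\RP^{n-2}$ with monodromy the antipodal map arrives at morally the same two-row $E^2$-page, since the lower row recovers $H_*(\RP^{n-2};\Z)$ and the upper recovers $H_*(\RP^{n-2};\Z_{\epsilon})$. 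What the paper's approach buys is that it avoids the extension problem entirely (the answer is read off a split chain complex) and it applies verbatim to an arbitrary chain complex $\mathsf{W}$ with $\Z_2$-action, not only to one coming from a space; what yours buys is brevity if spectral-sequence machinery is already on the table, and the collapse argument (no room for $d_{d-1}$ once $d\ge n$) is genuinely slick.

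There is, however, a real gap in your handling of the extensions. The short exact sequence is $0\to A\to H_k\to B\to 0$ with $A$ the upper-row contribution and $B$ the lower-row quotient, and a nontrivial extension requires $\mathrm{Ext}^1_\Z(B,A)\neq 0$, which happens in \emph{two} cases: $(B,A)=(\Z_2,\Z_2)$ and $(B,A)=(\Z_2,\Z)$. Your parity check rules out the former, but is silent on the latter. That case does in fact never arise given $d\ge n$ (the free classes of the upper row sit at $k=d-2$ and possibly $k=d+n-4$, neither of which can coincide with a $\Z_2$ of the lower row under the parity and range constraints), but you have to say so; as written, the splitting claim is not justified. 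You should also note that the twisted-coefficient row has $H_0(\RP^{n-2};\Z_w)=\Z_2$, not just torsion in \emph{positive} even degrees as you state; this class is exactly what produces the $\Z_2$ at $i=d-2$ for $d$ even, so dropping $p=0$ would lose a summand. Finally, the identification with $f(i)$ and $t(i)$, which you defer as ``bookkeeping,'' is where the subcase $d=n$ (doubling a free generator in degree $d-2$) and the $d<n$ symmetry reduction actually get verified; it should be carried out explicitly.
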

When a finite group acts freely on a manifold, the quotient is again a
manifold.
In particular, therefore, the manifold
part of Develin's conjecture follows from Theorem \ref{th:manifold},
whereas the ``complexity of homology'' part is implied by Theorem
\ref{th:homology}. 

For $d \ge n$, note that the homology of 
$B_{d,n}$ coincides with that of $S^{d-2} \times (S^{n-2}/\Z_2) \cong
S^{d-2} \times \RP^{n-2}$ for odd $d$. For even $d$, the homology
groups of the two manifolds differ in higher degrees for reasons to be
explained in Section~\ref{se:homology}.

For the computations that lead to Theorem
\ref{th:homology}, we have opted to work with slightly more general
chain complexes, since this requires no additional effort. 
An advantage of this approach is that it highlights the connection
between $B_{d,n} \cong (S^{d-2}\times S^{n-2})/\Z_2$
and $S^{d-2} \times (S^{n-2}/\Z_2)$.
Specifically, for any finite cell complex $X$ with a free
$\Z_2$-action, we show that 
$(S^{d-2}\times X)/\Z_2$ and $S^{d-2} \times (X/\Z_2)$ have the same
homology whenever $\dim X \le d-2$ and $d$ is odd. For even $d$, there
is still a connection, but the situation is slightly more complicated;
see Theorem~\ref{th:Vhemi} for details.

The remainder of the paper is organized as follows. We review some
concepts from tropical geometry in the next section. In Section
\ref{se:BDN}, $B_{d,n}$ is defined and given an explicit
simplicial decomposition in terms of trees. From it, Theorem
\ref{th:manifold} is deduced. The last section is devoted to the proof
of Theorem~\ref{th:homology}. Our approach is based on 
Forman's discrete Morse theory \cite{Forman}.

\section{Tropical convexity and notions of rank}
Recall that in the tropical semiring we define $a\odot b = a+b$ and
$a\oplus b = \min(a,b)$ for $a,b\in \R$. For example,
\[
0\odot 3 \oplus (-2)\odot 3 = 1.
\]

A natural semimodule structure on $\R^n$ is provided by the ``addition''
\[
(x_1, \dots, x_n)\oplus (y_1, \dots, y_n) = (x_1\oplus y_1, \dots, x_n
\oplus y_n)
\]
and the ``multiplication by scalar''
\[
\lambda \odot (x_1, \dots, x_n) = (\lambda \odot x_1, \dots, \lambda
\odot x_n). 
\]

Following \cite{DS} we say that $S\subseteq \R^n$ is {\em tropically
  convex} if $\lambda\odot x \oplus \mu \odot y\in S$ for all $x,y\in
  S$ and $\lambda, \mu\in \R$. Note that if $S$ is tropically
  convex, then $\lambda \odot x \in S$ for all $\lambda \in \R$, $x\in
  S$. Defining {\em tropical projective space}
\[
\TP^{n-1} = \R^n/(1,\dots,1)\R,
\]
any tropically convex set in $\R^n$ is therefore uniquely determined by its
image in $\TP^{n-1}$. We obtain a convenient set of representatives
for the elements of tropical projective space by requiring the first
coordinate to be zero.

The {\em tropical convex hull} $\tconv(S)$ is the smallest
tropically convex set which contains $S\subseteq \R^n$. It coincides
with the set of finite tropical linear combinations:
\[
\tconv(S) = \left \{\bigoplus_{x\in X}\lambda_x\odot x : \lambda_x\in
  \R, \, \emptyset\neq X \subseteq S, \, |X|< \infty \right \};
\]
see \cite{DS}.

An important observation is that the tropical convex hull of two
points in $\R^n$ forms a piecewise linear curve in
$\TP^{n-1}$. This curve is the {\em tropical line segment} between the
two points. 

\begin{figure}[htb]
\begin{center}
\epsfig{height=4.5cm, file=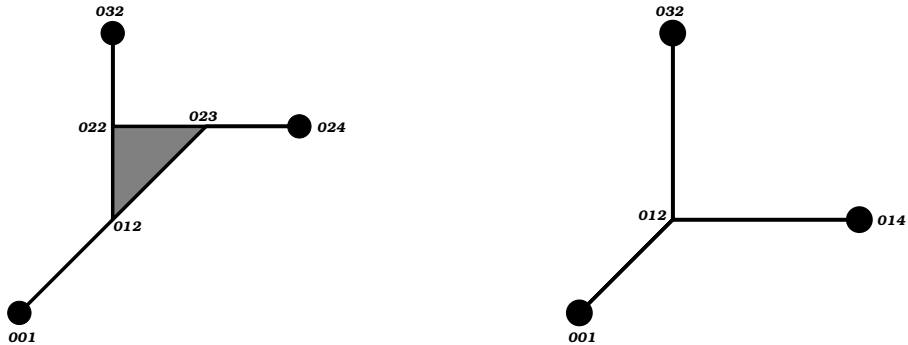}
\end{center} 
\caption{The tropical convex hulls of the point sets
  $\{(0,0,1),(0,3,2),(0,2,4)\}$ (left) and
  $\{(0,0,1),(0,3,2),(0,1,4)\}$ (right) in
  $\TP^2$.} \label{fi:hulls}
\end{figure}

In Figure \ref{fi:hulls}, the tropical convex hulls of two three-point
sets in $\TP^2$ are shown. According to the next definition, the
$3\times 3$ matrix with the three left-hand points as columns has
tropical rank $3$, whereas the points to the right form a matrix of
tropical rank $2$.

\begin{definition}[cf.\ Theorem 4.2 in \cite{DSS}]
The {\em tropical rank} of $M\in \M(d,n)$
equals one plus the dimension in $\TP^{d-1}$ of the tropical convex
hull of the columns of $M$.
\end{definition}
In particular, a matrix has tropical rank at most $2$ if and only if
the tropical convex hull of its columns is the union of the tropical
line segments between all pairs of columns. 

\begin{definition}
The {\em Barvinok rank} of $M\in \M(d,n)$ equals the
smallest number of points in $\R^d$ whose tropical convex hull
contains all columns of $M$.
\end{definition}
It is easy to see that the Barvinok rank cannot be smaller than the
tropical rank. In general, the two notions are different. For example, the
two matrices whose columns are the point sets in Figure \ref{fi:hulls}
both have Barvinok rank $3$. 

Observe that a matrix has Barvinok rank at most $2$ if and only if the
tropical convex hull of its columns is a tropical line segment.

\section{The manifold of Barvinok rank $2$ matrices}\label{se:BDN}

In this section, we shall deduce Theorem \ref{th:manifold}. To
begin with, we define the space $B_{d,n}$ which encodes the
topologically interesting part of the space of matrices of Barvinok
rank $2$. We think of $B_{d,n}$ as sitting inside $\R^{dn}$ with the
subspace topology.

Fix positive integers $d$ and $n$. Let $M\in
\M(d,n)$. As usual, we consider the columns of $M$ as a collection of $n$ marked points in
$\R^d$. Adding any real number to any row or any 
column of $M$ preserves the Barvinok rank; adding to a row merely
translates the point set, whereas adding to a column yields another
representative for the same point in $\TP^{d-1}$. Similarly,
multiplying $M$ by any $\lambda\in \R$ does not increase the Barvinok
rank (which is preserved if $\lambda \neq 0$). 

In order to get unique representatives for matrices under the
operations just described, the following definition is convenient.
\begin{definition}\label{de:representatives}
Let $B_{d,n}$ be the set of matrices $M\in
\M(d,n)$ satisfying 
\begin{itemize}
\item[(i)] The first row of $M$ is zero.
\item[(ii)] The smallest entry in every row of $M$ is zero.
\item[(iii)] As a point in $\R^{dn}$, $M$ is on the unit
sphere.
\end{itemize}
\end{definition}
\subsection{A simplicial complex of trees}

Let $P=\{p_1, \dots, p_d\}$ and $Q=\{q_1, \dots,
q_n\}$ be disjoint sets of cardinality $d$ and $n$, respectively.

We now describe an abstract simplicial complex with $B_{d,n}$ as
geometric realization. The simplices are encoded by combinatorial
trees whose leaves are marked using $P$ and $Q$ as label sets. This
model is equivalent to that given by Develin in \cite[\S~ 
3]{Develin}.\footnote{To translate from our trees to those of
  \cite{Develin}, simply replace the leaf labelled $p_i$ and its incident edge
  by a leaf ``heading off to infinity in the $i$-th coordinate
  direction'', and replace the leaf labelled $q_i$ and its incident
  edge by 
  the $i$-th marked point.} A completely analogous description in the
context of matrices of tropical rank $2$ was given by Markwig and Yu
\cite{MY}; their complex is denoted $\TDN$ below. 

Consider the set of trees $\T$ with leaf set $P\cup Q$ such that every
internal vertex (i.e.\ non-leaf) has degree at least three. We may
think of $\T$ as a simplicial complex in the following way. The vertex
set of the complex consists of all 
bipartitions of $P\cup Q$, and we identify a tree $\tau\in \T$ with the simplex
comprised of the bipartitions induced by the connected components that
arise when an internal edge (one not incident to a leaf) of $\tau$ is
removed. (It is well-known, and easy to see, that there is at most one
tree giving rise to any given set of bipartitions.) Clearly, the
simplices in the boundary of $\tau$ are those obtained by contracting
internal edges. 

Let $\TDN$ be the subcomplex of $\T$ which is induced by the bipartitions
where both parts have nonempty intersection with both $P$ and
$Q$. Our main object of study is the subcomplex of $\TDN$ which 
consists of the trees whose internal vertices form a path as induced
subgraph. Let us denote this complex by $\B{d}{n}$. 

\begin{proposition}[\S~ 3 in \cite{Develin}] \label{pr:realization}
A geometric realization of $\B{d}{n}$ is given by $B_{d,n}$.
\end{proposition}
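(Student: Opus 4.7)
The plan is to set up an explicit bijection between points of $B_{d,n}$ and points of the geometric realization of $\B{d}{n}$, and then verify that this bijection is a homeomorphism. This amounts to reading off combinatorial tree data from the piecewise linear structure of the tropical line segment spanned by the columns of a Barvinok rank $2$ matrix.

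First I would record the combinatorial data attached to a matrix $M \in B_{d,n}$. Since the Barvinok rank is at most $2$, the columns $m^1,\ldots,m^n$ all lie on a single tropical line segment in $\TP^{d-1}$, which is a piecewise linear path with at most $d-1$ linear pieces. Parametrizing this path by $\lambda \in \R$, each row $i$ has a transition parameter $s_i$ (the value of $\lambda$ at which coordinate $i$ switches between ``active'' and ``frozen''), and each column $j$ has a position parameter $t_j$ at which it sits on the path. Sorting the distinct values among $\{s_1,\ldots,s_d\} \cup \{t_1,\ldots,t_n\}$ produces an ordered list of internal vertices, which becomes the backbone of the tree $\tau(M)$; to the vertex carrying parameter $s_i$ I attach leaf $p_i$, and to the vertex carrying $t_j$ I attach leaf $q_j$. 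By construction the internal vertices of $\tau(M)$ form a path and each has degree at least three, and the normalization conditions in Definition~\ref{de:representatives} (first row zero, smallest row entry zero) force the bipartition induced by every internal edge to meet both $P$ and $Q$ on either side, so $\tau(M) \in \B{d}{n}$.

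Next I would describe the inverse correspondence on each open simplex. For a fixed $\tau \in \B{d}{n}$ with $k$ internal edges, the matrices $M \in B_{d,n}$ with $\tau(M)=\tau$ are parametrized by the $k$ positive gap lengths between consecutive internal vertices of $\tau$: these lengths reconstruct $s_i$ and $t_j$ and hence all entries of $M$ up to an overall scalar, which is then fixed by the unit-sphere normalization. The result is a homeomorphism between the relative interior of the open $(k-1)$-simplex indexed by $\tau$ and the stratum $\{M : \tau(M)=\tau\}$. Letting a single gap shrink to zero corresponds to merging two adjacent parameters, which geometrically contracts the associated internal edge of $\tau$ to produce a codimension-one face; this matches exactly the simplicial boundary relation in $\B{d}{n}$, so the strata glue together consistently.

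The main obstacle lies not in defining $\tau(M)$ but in checking that the resulting map $B_{d,n} \to |\B{d}{n}|$ is a homeomorphism across the strata where parameters degenerate. One must verify that as two parameters in $\{s_i\} \cup \{t_j\}$ approach each other the associated barycentric coordinate tends continuously to zero, and, conversely, that a small perturbation inside a boundary face opens up the correct edge in the tree. Once this is in hand, together with the standard fact that the faces of the simplicial complex $\B{d}{n}$ obtained by contracting internal edges match the degenerations described above (this is essentially Develin's observation in \cite[\S~3]{Develin}), the proposition follows.
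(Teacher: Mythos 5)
Your sketch reproduces essentially the same correspondence that the paper itself describes following the statement of Proposition~\ref{pr:realization}: read the tree $\tau(M)$ off the piecewise linear structure of the tropical line segment, with breakpoints carrying the $P$-leaves, column positions carrying the $Q$-leaves, and edge contractions matching degenerations where parameters collide. The paper offers no independent proof and instead cites Develin for the details you are gesturing at (the homeomorphism across strata), so your proposal is at the same level of detail and follows the same route.
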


Specifically, a matrix $M\in B_{d,n}$ is associated with a tree in $\B{d}{n}$ as
follows. Let $c_1, \dots, c_n$ denote the columns of $M$. Then,
$T=\tconv(c_1, \dots, c_n)$ is a tropical line segment in
$\TP^{d-1}$. Now construct a tree $\tau=\tau(M)$
whose internal vertex set is the union of $\{c_1, \dots, c_n\}$ and
the set of points where the curve $T$ is not smooth. Two internal
vertices $v_1\neq v_2$ are adjacent 
if and only if $\tconv(v_1, v_2)$ (which is a subset of $T$) contains
no other internal vertex. The leaf set of $\tau$ is $P\cup Q$. The
leaf $p_i$ 
is adjacent to the internal vertex which is closest to the origin
(which, by Condition (ii) of Definition \ref{de:representatives}, is
necessarily an internal vertex) among those where the $i$-th
coordinate is maximized. Finally, the leaf $q_i$ is adjacent to the
vertex $c_i$. The resulting tree (seen as an abstract, leaf-labelled tree) is
$\tau$.

\begin{example}\label{ex:example}
Consider the $6\times 5$ matrix
\[
M = \left(
\begin{array}{crrrr}
6 & 1 & 4 & 6 & 3\\
2 & -3 & -1 & 2 & -1\\
5 & -2 & 0 & 4 & 2\\
5 & -2 & 0 & 4 & 2\\
0 & -5 & -1 & 0 & -3\\
7 & -2 & 0 & 4 & 4
\end{array} 
\right).
\]
We shall see shortly that $M$ has Barvinok rank $2$. However, $M$ does
not satisfy the conditions of Definition \ref{de:representatives},
hence does not belong to $B_{6,5}$. Subtracting the first row of $M$
from each row, adding an appropriate amount to each row and
rescaling, we obtain the following matrix which represents the
equivalence class of $M$ in $B_{6,5}$:
\[
M^\prime = \frac{1}{\sqrt{97}}\left(
\begin{array}{ccccc}
0 & 0 & 0 & 0 & 0\\
1 & 1 & 0 & 1 & 1\\
3 & 1 & 0 & 2 & 3\\
3 & 1 & 0 & 2 & 3\\
0 & 0 & 1 & 0 & 0\\
4 & 1 & 0 & 2 & 5
\end{array} 
\right).
\]
The tropical convex hull of the columns of $M^\prime$ is shown in
Figure \ref{fi:example}. It is generated by the third and fifth
columns. Thus, $M^\prime$ (and $M$) has Barvinok rank $2$. The associated tree
$\tau(M^\prime)$ encoding the simplex in $\B{6}{5}$ which contains
$M^\prime$ is also displayed.
\end{example}

\begin{figure}[htb]
\begin{center}
\epsfig{height=6.5cm, file=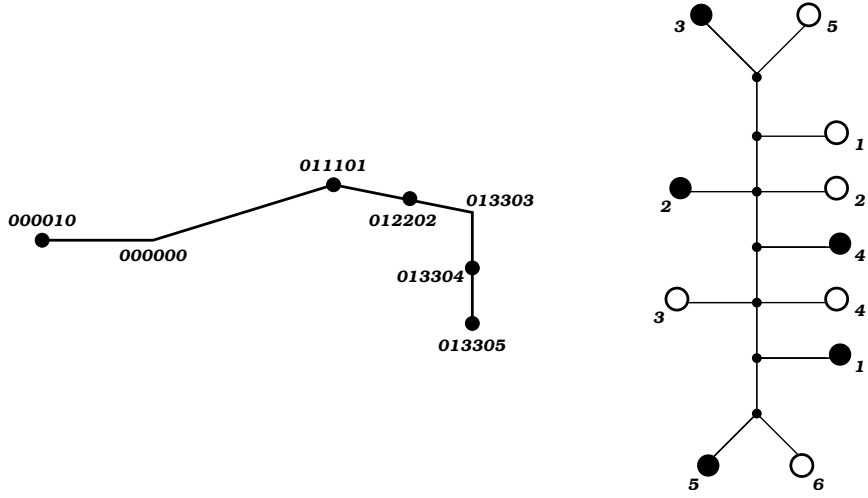}
\end{center}
\caption{(Left) The tropical convex hull of five points on a tropical line
  segment in $\TP^5$. Coordinates of the five points and the
  singular points on 
  the curve are indicated; the point configuration from Example
  \ref{ex:example} has been inflated by a factor
  $\sqrt{97}$ in order to obtain integer coordinates. (Right) The tree in $\B{6}{5}$ associated with the five points. Leaves
  represented by white dots are elements of $P$, whereas black leaves
  belong to $Q$.}\label{fi:example} 
\end{figure}

\subsection{Proof of Theorem \ref{th:manifold}}
Our next goal is to prove that $\B{d}{n}$ is homeomorphic to
$(S^{d-2}\times S^{n-2})/\Z_2$, where the generator of $\Z_2$ acts by
the antipodal map on both components. 

For a set $S$, let $\bool{S}$ denote the proper part of the Boolean
lattice on $S$. Thus, $\bool{S}$ is the poset of all proper,
nonempty subsets of $S$ ordered by inclusion. 

Recall our sets $P$ and $Q$. The chains
in $\bool{P}\times \bool{Q}$ can be thought of as compositions (i.e.\
ordered set partitions) of $P\cup Q$ such that the first and the last
block both have nonempty intersection with both $P$ and
$Q$; let us for brevity call such compositions {\em balanced}. Namely,
a chain $(S_1,T_1)< \cdots < (S_k,T_k)$ corresponds to 
the balanced composition $(C_1, \dots, C_{k+1})$, where
\[
C_i = \left(\bigcup_{j=1}^i (S_j\cup T_j)\right)\setminus  \left(\bigcup_{j=1}^{i-1} (S_j\cup T_j)\right),
\]
identifying $S_{k+1}$ and $T_{k+1}$ with $P$ and $Q$,
respectively. Under this bijection, inclusion among chains corresponds
to refinement among compositions.  

Let $\Delta(\cdot)$ denote order complex\footnote{The order complex of
a finite poset is the abstract simplicial complex whose simplices are
the totally ordered subsets.}. We have a map of simplicial
complexes $\varphi: \Delta \left(\bool{P}\times \bool{Q}\right)\to \B{d}{n}$ by
sending a balanced composition $C = (C_1, \dots, C_k)$ to the unique
tree $\varphi(C)\in \B{d}{n}$ in which $C_i$ is the set of leaves adjacent
to the $i$th internal vertex, counting along the internal path from one of
the endpoints. As an example, there are two balanced compositions that
are mapped to the tree in Figure \ref{fi:example}, namely
$(p_5q_3,p_1,p_2q_2,q_4,p_3p_4,q_1,p_6q_5)$ and its reverse
composition $(p_6q_5,q_1,p_3p_4,q_4,p_2q_2,p_1,p_5q_3)$. 

Conversely, for $\tau\in \B{d}{n}$, suppose the path from one endpoint of
the internal path to the other traverses the internal vertices in the
order $v_1, \dots, v_k$. Let $C_i\subset P\cup Q$ be the set of leaves
adjacent to $v_i$. Clearly, 
\[
\varphi^{-1}(\tau)=\{(C_1, \dots, C_k), (C_k,\dots, C_1)\}.
\]
This shows that $\varphi$ induces an isomorphism of simplicial
complexes:
\[
\Delta\left(\bool{P}\times \bool{Q}\right)/\Z_2 \cong \B{d}{n},
\]
where the generator of $\Z_2$ acts by taking complement inside
$\bool{P}$ and $\bool{Q}$ simultaneously.

Given finite posets $\Pi$ and $\Sigma$, the product space
$\Delta(\Pi)\times\Delta(\Sigma)$ has a natural cell complex structure, where
the cells are of the form $\Delta(C^\Pi)\times \Delta(C^\Sigma)$ for
chains $C^\Pi\subseteq \Pi$, $C^\Sigma\subseteq \Sigma$. It is
well known \cite[Lemma 8.9]{ES} that $\Delta(\Pi\times \Sigma)$ is a
simplicial subdivision 
of $\Delta(\Pi)\times\Delta(\Sigma)$, a cell $\Delta(C^\Pi)\times
\Delta(C^\Sigma)$ being subdivided by $\Delta(C^\Pi\times C^\Sigma)$.

In particular, $\Delta\left(\bool{P}\times \bool{Q}\right) \cong
\Delta\left(\bool{P}\right) \times
\Delta\left(\bool{Q}\right)$. Moreover, the $\Z_2$-action clearly respects the
subdivision so that we obtain
\[
\B{d}{n} \cong \left(\Delta\left(\bool{P}\right) \times
\Delta\left(\bool{Q}\right)\right)/\Z_2,
\]
where the generator of $\Z_2$ acts by taking complementary chains in
both $\bool{P}$ and $\bool{Q}$. 

Finally, it is well known that $\Delta\left(\bool{S}\right)$ is homeomorphic to
the $(|S|-2)$-sphere, and that the complement map on $\bool{S}$
corresponds to the antipodal map on the sphere. This concludes the
proof of Theorem \ref{th:manifold}.

  \section{Computing the homology}
  \label{se:homology}
  The main aim of the remainder of the paper is to compute the
  integral homology groups of $B_{d,n}$, thereby proving Theorem
  \ref{th:homology}. 
  By Theorem~\ref{th:manifold}, $B_{d,n}$ is homeomorphic to 
  $(S^{d-2}\times S^{n-2})/\Z_2$. To compute the homology of 
  this manifold, we consider the standard cell decomposition into 
  hemispheres of each of $S^{d-2}$ and $S^{n-2}$; see
  Section~\ref{hemisphere-sec} for a description. 

  It is useful, however, to work with slightly more general
  chain complexes. Thus, we shift gears and temporarily forget about the
  context of the previous sections. 

  Let $R$ be a principal ideal domain of odd or zero
  characteristic. Let
  \[
  \begin{CD}
    \mathsf{V} : \cdots @>\partial>> V_{d+1} @>\partial>> V_{d}
    @>\partial>> V_{d-1} @>\partial>> \cdots  \\
    \mathsf{W} : \cdots @>\partial>> W_{d+1} @>\partial>> W_{d}
    @>\partial>> W_{d-1} @>\partial>> \cdots 
  \end{CD}
  \]
  be chain complexes of $R$-modules equipped with a
  degree-preserving $\Z_2$-action. This means that for each of the two
  chain complexes there is a degree-preserving involutive
  automorphism $\iota$ commuting with $\partial$. 

  Consider the tensor product $\mathsf{V} \tensor \mathsf{W}$ over
  $R$; the $k$th chain group is equal to 
  \[
  \bigoplus_{i+j = k} V_i \tensor W_j,
  \]
  and the boundary map is given by
  \[
  \partial(v \tensor w) = \partial(v) \tensor w + (-1)^i v
  \tensor \partial(w)
  \]
  for $v \in V_i$ and $w \in W_j$.
  We obtain a $\Z_2$-action on $\mathsf{V} \tensor \mathsf{W}$ by
  \[
  \iota(v \tensor w) = \iota(v) \tensor \iota(w).
  \]

  For any chain complex $\mathsf{C}$ equipped with a $\Z_2$-action
  induced by the involution $\iota$, let $\mathsf{C}^+$
  be the chain complex obtained 
  by identifying an element $c$ with zero whenever $c+\iota(c) = 0$.
  Moreover, define $\mathsf{C}^-$
  to be the chain complex obtained in the similar manner 
  by identifying an element $c$ with zero whenever $c-\iota(c) = 0$.
  Our goal is to examine $(\mathsf{V} \tensor \mathsf{W})^+$.

  \subsection{The hemispherical chain complex}
  \label{hemisphere-sec}

  Let us consider the special case of interest in our calculation of
  the homology of $B_{d,n}$. Write $D = d-2$ and $N = n-2$. For $0 \le
  i \le D$, let $V_i$ be a free $R$-module generated by two 
  elements  $\sigma_i^+ = \sigma_{i}^{+1}$ and $\sigma_i^- =
  \sigma_{i}^{-1}$; set $V_i = 0$ for $i<0$ and $i>D$. We define
  \begin{equation}
    \partial(\sigma_{i}^\epsilon) = \sigma_{i-1}^\epsilon + (-1)^i
    \sigma_{i-1}^{-\epsilon}
    \label{hemiboundary-eq}
  \end{equation}
  for $\epsilon = \pm 1$.
  This means that $\mathsf{V}$ is the unreduced chain complex
  corresponding to the standard hemispherical 
  cell decomposition of the $D$-sphere.
  A $\Z_2$-action is given by mapping $\sigma_{i}^+$
  to $\sigma_{i}^{-}$ and vice versa. This corresponds to the
  antipodal action on the sphere, and 
  $\mathsf{V}^+$ consequently corresponds to the minimal cell
  decomposition of real projective $D$-space
  \cite[Ex. 2.42]{Hatcher}.
  We refer
  to $\mathsf{V}$ as the standard hemispherical chain complex over
  $R$ of degree $D$. Using Theorem~\ref{th:manifold}, we deduce the
  following.
  \begin{lemma}
    Let $\mathsf{V}$ and $\mathsf{W}$ be the standard hemispherical
    chain complexes over $R$ of degree $D = d-2$ and $N = n-2$,
    respectively. Then, the homology of $\left(\mathsf{V} \tensor
      \mathsf{W}\right)^+$ is isomorphic to the unreduced homology
    over $R$ of $B_{d,n}$.
  \end{lemma}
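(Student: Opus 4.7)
The plan is to identify $(\mathsf{V}\tensor\mathsf{W})^+$ with the cellular chain complex of $(S^{d-2}\times S^{n-2})/\Z_2$ equipped with the cell structure inherited from the product of hemispherical decompositions, and then invoke Theorem~\ref{th:manifold}.

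As a first step, I check that $\mathsf{V}$ and $\mathsf{W}$ really are the cellular chain complexes of $S^{D}$ and $S^{N}$ (with $D = d-2$, $N = n-2$) in the hemispherical decomposition, and that $\iota$ corresponds to the antipodal action. The boundary rule \eqref{hemiboundary-eq} is the standard one for this decomposition, and a short calculation shows that the fundamental class $\sigma_D^+ + (-1)^{D+1}\sigma_D^-$ transforms under $\iota$ by the sign $(-1)^{D+1}$, matching the known degree of the antipodal map on $S^D$. By the cellular K\"unneth formula, $\mathsf{V}\tensor\mathsf{W}$ is then the cellular chain complex of $S^D \times S^N$ with its product cell structure, and the given $\Z_2$-action on $\mathsf{V}\tensor\mathsf{W}$ is induced by the diagonal antipodal involution. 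Since this involution is free, the covering projection $\pi\colon S^D \times S^N \to (S^D\times S^N)/\Z_2$ descends the cell structure to the quotient, yielding one cell for each $\iota$-orbit of product cells.

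Next I argue that the induced cellular chain map $\pi_{\#}$ realises the passage to $(\mathsf{V}\tensor\mathsf{W})^+$. The convention $\iota(\sigma_i^{\epsilon}\tensor\sigma_j^{\delta})=\sigma_i^{-\epsilon}\tensor\sigma_j^{-\delta}$ (with no extra sign) guarantees that the two lifts of any quotient cell can be oriented so that $\pi$ is orientation-preserving on each, so $\pi_{\#}$ sends both members of any $\iota$-orbit to the same generator downstairs. Grouping a generic chain by orbits, one sees that $\pi_{\#}(c)=0$ if and only if, on each orbit, the coefficients on the two members are negatives of each other, that is, $\iota(c)=-c$ or equivalently $c+\iota(c)=0$. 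Hence $\ker\pi_{\#}$ is exactly the subcomplex we kill to form $(\mathsf{V}\tensor\mathsf{W})^+$, so $\pi_{\#}$ descends to an isomorphism of chain complexes $(\mathsf{V}\tensor\mathsf{W})^+ \cong C_*((S^D\times S^N)/\Z_2)$. Passing to homology and invoking Theorem~\ref{th:manifold} then finishes the proof.

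The main technical subtlety will be the orientation bookkeeping in the previous paragraph: one must confirm that the sign conventions in \eqref{hemiboundary-eq} and in the definition of $\iota$ together model a genuine $\Z_2$-equivariant hemispherical cell structure on $S^D$, rather than one twisted by a sign on either factor. Once orientations are pinned down, the identification of $\ker\pi_{\#}$ with the $(-1)$-eigenspace of $\iota$ is essentially formal, and the conclusion reduces to the definitions.
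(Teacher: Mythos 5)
Your proposal is correct and follows exactly the route the paper has in mind: the paper offers no explicit proof but simply says the lemma is deduced from Theorem~\ref{th:manifold}, leaving implicit the identification of $(\mathsf{V}\tensor\mathsf{W})^+$ with the cellular chain complex of $(S^{D}\times S^{N})/\Z_2$ in the product hemispherical cell structure. You fill in the detail the paper suppresses, namely the verification that $\iota$ commutes with the boundary in \eqref{hemiboundary-eq} (so the sign conventions really do model the antipodal map), and the check that $\ker\pi_{\#}$ is the submodule $\{c : c+\iota(c)=0\}$ spanned by the differences $\sigma-\iota(\sigma)$; both of these are correct and exactly what is needed to make the one-line deduction rigorous.
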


  \begin{lemma}
    \label{lem:proj}
    Suppose that $\mathsf{V}$ is the standard hemispherical chain
    complex over $R$ of degree $D$.
    Then,
    \[
    H_i(\mathsf{V}^+) \cong
    \left\{
    \begin{array}{ll}
      R          & \text{if $i=0$,}\\
      R/(2R)     & \text{if $1\le i < D$ and $i$ is odd,}\\
      R          & \text{if $i=D$ and $D$ is odd,}\\
      0          & \text{otherwise,}
    \end{array}
    \right.
    \]
    and 
    \[
    H_i(\mathsf{V}^-) \cong 
    \left\{
    \begin{array}{ll}
      R/(2R)     & \text{if $0 \le i < D$ and $i$ is even,}\\
      R          & \text{if $i=D$ and $D$ is even,}\\
      0          & \text{otherwise.}
    \end{array}
    \right.
    \]
  \end{lemma}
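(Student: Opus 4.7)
The plan is to diagonalize the $\Z_2$-action on each $V_i$. This turns $\mathsf{V}^+$ and $\mathsf{V}^-$ into chain complexes of free $R$-modules of rank one whose homology can be read off by inspection.

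Since $\iota$ interchanges $\sigma_i^+$ and $\sigma_i^-$, the anti-invariant submodule $\{c \in V_i : c + \iota(c) = 0\}$ is generated by $\sigma_i^+ - \sigma_i^-$, while the invariant submodule is generated by $\sigma_i^+ + \sigma_i^-$. Hence $V_i^+$ is free of rank one on the common class $\tau_i$ of $\sigma_i^+$ and $\sigma_i^-$, and \eqref{hemiboundary-eq} reduces to
\[
\partial(\tau_i) = \bigl(1 + (-1)^i\bigr)\tau_{i-1}.
\]
Symmetrically, $V_i^-$ is free of rank one on the class $\tau_i$ of $\sigma_i^+$ (with $\sigma_i^- = -\tau_i$), and
\[
\partial(\tau_i) = \bigl(1 - (-1)^i\bigr)\tau_{i-1}.
\]
Thus $\mathsf{V}^+$ has trivial differential in odd degrees and multiplication by $2$ in even degrees, while $\mathsf{V}^-$ has the opposite pattern; in particular $\mathsf{V}^+$ coincides with the minimal cellular chain complex of $\RP^D$ recorded in \cite[Ex.~2.42]{Hatcher}.

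From here the computation is mechanical. The hypothesis that $R$ has odd or zero characteristic ensures that $2$ is not a zero divisor in $R$, so multiplication by $2$ has kernel $0$ and cokernel $R/2R$. In each internal degree the subquotient $\ker/\mathrm{im}$ is therefore either $0$ or $R/2R$ according to parity, and at the extreme degrees $0$ and $D$ only one of the two adjacent boundary maps is present. Tabulating the four parity combinations of $i$ and $D$ reproduces exactly the list in the statement. There is essentially no obstacle; the only point requiring mild care is the top degree $D$, where the outcome depends on whether $\partial_D$ vanishes (contributing an $R$ summand) or equals multiplication by $2$ (contributing $0$), and which of $\mathsf{V}^+$, $\mathsf{V}^-$ one is considering.
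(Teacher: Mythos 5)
Your proposal is correct and takes essentially the same approach as the paper: pass to the rank-one quotients $V_i^+$ and $V_i^-$, observe that the differential becomes multiplication by $1+(-1)^i$ (resp.\ $1-(-1)^i$), use that $2$ is a non-zero-divisor under the stated characteristic hypothesis, and read off the homology degree by degree. The preliminary remarks about invariant and anti-invariant submodules are a harmless digression but add nothing to the argument, which is the same as the paper's.
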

  \begin{proof}
    For each $k$, we have that $\sigma_k^+ = \sigma_k^-$ in the
    group $V_k^+$. Hence,
    \[
    \partial(\sigma_{i}^+) = \sigma_{i-1}^+ + (-1)^i
    \sigma_{i-1}^- = (1+(-1)^i)\sigma_{i-1}^+
    \]
    in $V_{i-1}^+$, which is $2\sigma_{i-1}^+$ if $i$ is even and $0$
    if $i$ is odd. Since the characteristic of $R$ is odd or zero, the
    first claim follows. 

    For the second claim, note that 
    $\sigma_k^- = - \sigma_k^+$ in the group
    $V_k^-$. Therefore,
    \[
    \partial(\sigma_{i}^+) = \sigma_{i-1}^+ + (-1)^i
    \sigma_{i-1}^- = (1-(-1)^i)\sigma_{i-1}^+
    \]
    in $V_{i-1}^-$, 
    which is $2\sigma_{i-1}^+$ if $i$ is odd and $0$ if
    $i$ is even. This proves the claim.
  \end{proof}

  \subsection{$2$ is a unit in $R$}
  \label{se:2unit}

  First we consider the case that $2$ is a unit in $R$.

  Let $\mathsf{C}$ be a chain complex of $R$-modules with an
  involution $\iota$. Then, 
  we may write each element $c$ uniquely as a sum $c = a + b$ 
  such that $a = \iota(a)$ and $b = - \iota(b)$. Namely, 
  $a = \frac{1}{2}(c + \iota(c))$
  and $b = \frac{1}{2}(c - \iota(c))$.
  This means that $\mathsf{C}^+$ can be identified with
  the subcomplex of elements $a$ satisfying $a - \iota(a) = 0$
  and $\mathsf{C}^-$ with the subcomplex of elements $b$ satisfying 
  $b + \iota(b) = 0$. Moreover, 
  we may identify $\mathsf{C}$ with the direct sum
  $\mathsf{C}^+ \oplus \mathsf{C}^-$.

  Applying the above to each of $\mathsf{V}$ and $\mathsf{W}$, we
  obtain that 
  \[
  \mathsf{V} \tensor \mathsf{W}
  = 
  \left(\mathsf{V}^+ \tensor \mathsf{W}^+\right) \oplus
  \left(\mathsf{V}^- \tensor \mathsf{W}^-\right) \oplus
  \left(\mathsf{V}^+ \tensor \mathsf{W}^-\right) \oplus
  \left(\mathsf{V}^- \tensor \mathsf{W}^+\right).
  \]
  We have that $\iota(x) = x$ if 
  \[
  x \in 
  \left(\mathsf{V}^+ \tensor \mathsf{W}^+\right) \oplus
  \left(\mathsf{V}^- \tensor \mathsf{W}^-\right)
  \]
  and 
  $\iota(x) = -x$ if 
  \[
  x \in 
  \left(\mathsf{V}^+ \tensor \mathsf{W}^-\right) \oplus
  \left(\mathsf{V}^- \tensor \mathsf{W}^+\right).
  \]
  As a consequence, 
  \[
  \left(\mathsf{V} \tensor \mathsf{W}\right)^+ 
  = 
  \left(\mathsf{V}^+ \tensor \mathsf{W}^+\right) \oplus
  \left(\mathsf{V}^- \tensor \mathsf{W}^-\right).
  \]
  Applying K\"unneth's theorem \cite[Th. 3B.5]{Hatcher}, we obtain the
  following 
  result.   
  \begin{proposition}
    \label{prop:kunneth}
    If $H_i(\mathsf{V}^+)$ and $H_i(\mathsf{V}^-)$ 
    are finitely generated free $R$-modules for each $i$, then
    \[
    H_d(\left(\mathsf{V} \tensor \mathsf{W}\right)^+)
    \cong 
    \sum_{i+j = d} 
    \left(H_i(\mathsf{V}^+) \tensor H_j(\mathsf{W}^+)\right)
    \oplus
    \left(H_i(\mathsf{V}^-) \tensor H_j(\mathsf{W}^-)\right).
    \]
  \end{proposition}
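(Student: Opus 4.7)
The plan is immediate given the decomposition derived in Section~\ref{se:2unit} just above the statement, which identifies
\[
(\mathsf{V}\tensor \mathsf{W})^+ = (\mathsf{V}^+\tensor \mathsf{W}^+) \oplus (\mathsf{V}^-\tensor \mathsf{W}^-)
\]
as chain complexes of $R$-modules. Since homology commutes with finite direct sums, it suffices to compute $H_d(\mathsf{V}^\epsilon \tensor \mathsf{W}^\epsilon)$ for each sign $\epsilon \in \{+,-\}$ and assemble the two results.

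For each $\epsilon$, I would invoke the algebraic K\"unneth theorem \cite[Th.~3B.5]{Hatcher} on $\mathsf{V}^\epsilon$ and $\mathsf{W}^\epsilon$. This produces a short exact sequence
\[
0 \to \bigoplus_{i+j=d} H_i(\mathsf{V}^\epsilon) \tensor H_j(\mathsf{W}^\epsilon) \to H_d(\mathsf{V}^\epsilon \tensor \mathsf{W}^\epsilon) \to \bigoplus_{i+j=d-1} \mathrm{Tor}^R_1(H_i(\mathsf{V}^\epsilon), H_j(\mathsf{W}^\epsilon)) \to 0.
\]
The freeness hypothesis on $H_i(\mathsf{V}^\pm)$ forces every Tor summand to vanish, since Tor over a PID is zero as soon as one of its arguments is flat. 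The sequence therefore collapses to an isomorphism, and summing over $\epsilon$ yields exactly the formula in the proposition.

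The only point that calls for a sentence of justification — and the thing closest to an obstacle — is verifying the K\"unneth hypotheses, which require the chain complexes in question to be chainwise free over $R$. Here one uses, for a second time, that $2$ is a unit: the idempotents $\tfrac{1}{2}(\mathrm{id}\pm\iota)$ split the ambient complexes as $\mathsf{V} = \mathsf{V}^+\oplus\mathsf{V}^-$ and $\mathsf{W} = \mathsf{W}^+\oplus\mathsf{W}^-$, so, taking $\mathsf{V}$ and $\mathsf{W}$ to be chainwise free as in the intended hemispherical application, the complexes $\mathsf{V}^\pm$ and $\mathsf{W}^\pm$ are direct summands of free $R$-modules, hence themselves free (projective equals free over a PID). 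With this in hand the input to K\"unneth is legitimate and the argument goes through without further incident.
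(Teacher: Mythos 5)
Your proof is correct and follows the paper's route exactly: the paper derives the decomposition $(\mathsf{V}\tensor \mathsf{W})^+ = (\mathsf{V}^+\tensor \mathsf{W}^+) \oplus (\mathsf{V}^-\tensor \mathsf{W}^-)$ in the surrounding text and then presents the proposition as an immediate consequence of K\"unneth's theorem, with no further details supplied. Your caveat about the chain-level freeness hypothesis in K\"unneth is a genuine point that the paper leaves implicit, and your resolution via the idempotents $\tfrac{1}{2}(\mathrm{id}\pm\iota)$ together with projective-implies-free over a PID is the right way to justify it in the intended hemispherical application.
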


  Suppose that $\mathsf{V}$ is the standard hemispherical chain complex
  of degree $D$. By Lemma~\ref{lem:proj}, we have that 
  $H_i(\mathsf{V}^+) \cong H_i(\mathsf{V}^-) \cong 0$ 
  unless $i = 0$ or $i=D$. 
  Moreover, $H_0(\mathsf{V}^+) \cong R$ and
  $H_0(\mathsf{V}^-) \cong 0$. 
  Finally, 
  if $D$ is odd, then $H_D(\mathsf{V}^+) \cong R$ and  
  $H_D(\mathsf{V}^-) \cong 0$.
  If instead $D$ is even, then
  $H_D(\mathsf{V}^+) \cong 0$ and  
  $H_D(\mathsf{V}^-) \cong R$. The following assertion is an immediate
  consequence:

  \begin{proposition}
    \label{prop:kunneth-hemi}
    If $\mathsf{V}$ is the standard hemispherical chain complex
    of degree $D$, then
    the homology of $\left(\mathsf{V} \tensor \mathsf{W}\right)^+$ consists
    of one copy of  
    $H_j(\mathsf{W}^+)$ in degree $j$ for each $j$
    and one copy of 
    either $H_j(\mathsf{W}^+)$ or $H_j(\mathsf{W}^-)$ in degree $D+j$
    for each $j$. The former is the case if $D$ is odd, the latter
    if $D$ is even. 
\end{proposition}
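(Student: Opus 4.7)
My plan is to show that Proposition~\ref{prop:kunneth-hemi} falls out of Proposition~\ref{prop:kunneth} almost immediately, once we verify the hypotheses and substitute the explicit description of $H_*(\mathsf{V}^\pm)$ given by Lemma~\ref{lem:proj}.

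First I would check that Proposition~\ref{prop:kunneth} applies. Since $2$ is a unit in $R$, the quotients $R/(2R)$ appearing in Lemma~\ref{lem:proj} vanish, so every group $H_i(\mathsf{V}^+)$ and $H_i(\mathsf{V}^-)$ is either $0$ or a single copy of $R$. In particular they are free and finitely generated, and the K\"unneth formula for $(\mathsf{V}\tensor\mathsf{W})^+$ from Proposition~\ref{prop:kunneth} is available with no correction terms.

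Next I would substitute. Under the standing assumption on $R$, Lemma~\ref{lem:proj} reduces to: $H_0(\mathsf{V}^+)\cong R$, $H_0(\mathsf{V}^-)=0$, all intermediate groups vanish, and in the top degree $H_D(\mathsf{V}^+)\cong R$, $H_D(\mathsf{V}^-)=0$ if $D$ is odd, whereas $H_D(\mathsf{V}^+)=0$, $H_D(\mathsf{V}^-)\cong R$ if $D$ is even. Plugging these into
\[
H_d\bigl((\mathsf{V}\tensor\mathsf{W})^+\bigr)\cong \bigoplus_{i+j=d}\bigl(H_i(\mathsf{V}^+)\tensor H_j(\mathsf{W}^+)\bigr)\oplus\bigl(H_i(\mathsf{V}^-)\tensor H_j(\mathsf{W}^-)\bigr),
\]
only the indices $i=0$ and $i=D$ contribute. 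The $i=0$ summand yields exactly one copy of $H_d(\mathsf{W}^+)$, valid for all $d$. The $i=D$ summand yields one copy of $H_{d-D}(\mathsf{W}^+)$ when $D$ is odd (since then $H_D(\mathsf{V}^+)\cong R$ is the nonzero piece), and one copy of $H_{d-D}(\mathsf{W}^-)$ when $D$ is even (since then $H_D(\mathsf{V}^-)\cong R$ takes over). Setting $j=d-D$ gives the statement.

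There is essentially no obstacle: the content is entirely bookkeeping between Lemma~\ref{lem:proj} and Proposition~\ref{prop:kunneth}. The only point that deserves a sentence of comment is why the parity dichotomy in the top dimension propagates into a dichotomy between $\mathsf{W}^+$ and $\mathsf{W}^-$ in the conclusion, and this is exactly the observation that the unique nonzero top homology of $\mathsf{V}^{\pm}$ lives in the $+$ part for odd $D$ and in the $-$ part for even $D$, which then pairs with the matching sign summand of $\mathsf{W}$ in the K\"unneth splitting.
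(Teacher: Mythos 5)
Your proposal is correct and follows exactly the paper's own argument: the text preceding Proposition~\ref{prop:kunneth-hemi} records precisely the specialization of Lemma~\ref{lem:proj} under the standing assumption that $2$ is a unit (so the $R/(2R)$ terms vanish, leaving only the $i=0$ and $i=D$ contributions), and then declares the proposition an immediate consequence of Proposition~\ref{prop:kunneth}. You have merely spelled out the bookkeeping slightly more explicitly, including the verification that the freeness hypothesis of Proposition~\ref{prop:kunneth} holds.
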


Identifying $\mathsf{V}$ and $\mathsf{W}$ with the cellular chain
complexes corresponding to the hemispherical cell decompositions of
the $(d-2)$-sphere and the $(n-2)$-sphere, respectively, yields the
following corollary, which could also be deduced using transfer
methods; see e.g.\ Bredon \cite[\S~ III]{Bredon}.

\begin{corollary}
  \label{cor:freepart}
  If $2$ is invertible in the coefficient ring $R$, then the reduced
  homology of $B_{d,n}$ is given by
  \[
  \widetilde{H}_i(B_{d,n}) \cong 
  \begin{cases}
    R^2 & \text{if $i+2=d=n$ and $i$ is odd,}\\
    R & \text{if $i+2=n\neq d$ and $i$ is odd,}\\
    R & \text{if $i+2=d \neq n$ and $i$ is odd,}\\
    R & \text{if $i=d+n-4$ and $i$ is even,}\\
    0 & \text{otherwise.}
  \end{cases}
  \]
  In particular, the free part of the integral homology of $B_{d,n}$ is 
  as described in Theorem~\ref{th:homology}.
\end{corollary}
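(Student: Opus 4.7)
The plan is to apply Proposition~\ref{prop:kunneth-hemi} a second time, now to the situation in which \emph{both} tensor factors are hemispherical. By the lemma preceding Lemma~\ref{lem:proj}, the unreduced homology of $B_{d,n}$ over $R$ is isomorphic to $H_*((\mathsf{V}\tensor\mathsf{W})^+)$, where $\mathsf{V}$ and $\mathsf{W}$ are the standard hemispherical chain complexes of degrees $D=d-2$ and $N=n-2$. Since $2$ is a unit in $R$, every group $R/(2R)$ appearing in Lemma~\ref{lem:proj} vanishes, so only the extremal homology of $\mathsf{W}^\pm$ survives: $H_0(\mathsf{W}^+)\cong R$, and exactly one of $H_N(\mathsf{W}^+)$, $H_N(\mathsf{W}^-)$ is isomorphic to $R$ according as $N$ is odd or even, all other $H_j(\mathsf{W}^\pm)$ being zero.

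Proposition~\ref{prop:kunneth-hemi} then assembles $H_*((\mathsf{V}\tensor\mathsf{W})^+)$ from two blocks: a copy of $H_j(\mathsf{W}^+)$ placed in degree $j$ for each $j$, and a copy of $H_j(\mathsf{W}^\epsilon)$ placed in degree $D+j$ for each $j$, where $\epsilon=+$ if $D$ is odd and $\epsilon=-$ if $D$ is even. Combining this with the previous step produces at most four copies of $R$, located in degrees $0$, $N$, $D$, and $D+N$: the copy in degree $0$ is always present; those in degrees $N$ and $D$ appear iff $N$, respectively $D$, is odd; and the copy in degree $D+N$ appears iff $D$ and $N$ have the same parity, being supplied by $H_N(\mathsf{W}^+)$ when both are odd and by $H_N(\mathsf{W}^-)$ when both are even.

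Passing to reduced homology removes the copy in degree $0$. When $d=n$ one has $D=N$, so the two potential copies in degrees $D$ and $N$ fall in the same degree and merge into $R^2$; this is the only way the formula in the corollary produces a free rank exceeding one. A direct case analysis on the parities of $d$ and $n$, using the translations $i+2=d\Leftrightarrow i=D$, $i+2=n\Leftrightarrow i=N$, and $i=d+n-4\Leftrightarrow i=D+N$, then matches the surviving contributions bijectively with the four nonzero bullets of the statement. The main obstacle is purely the bookkeeping required to verify this bijection and to check that degenerate boundary cases (such as $d$ or $n$ equal to $2$, where some of the four distinguished degrees coincide) still reproduce the claimed formula.
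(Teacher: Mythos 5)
Your argument is correct and follows essentially the same route as the paper, whose proof consists precisely of invoking Lemma~\ref{lem:proj} and Proposition~\ref{prop:kunneth-hemi} as you do; your write-up merely spells out the bookkeeping that the paper calls ``immediate''. One small omission: the final sentence of the corollary (that the \emph{free part of the integral} homology matches Theorem~\ref{th:homology}) does not follow from the $R$-coefficient computation alone — the paper gets it by taking $R=\Q$ and appealing to the universal coefficient theorem, a step you should add.
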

\begin{proof}
  This is an immediate consequence of Lemma~\ref{lem:proj}  
  and Proposition~\ref{prop:kunneth-hemi}. For the free part of the
  integral homology, set $R = \Q$ and apply the universal coefficient
  theorem; see e.g.\ \cite[Cor. 3A.6]{Hatcher}. 
\end{proof}

  \subsection{Discrete Morse theory on chain complexes}
  \label{se:dmt}
  To compute the homology of $\left({\sf V}
  \tensor {\sf W}\right)^+$ in the case that $2$ is not a unit in
  $R$, we will use an algebraic version \cite[\S~4.4]{thesis} of
  discrete Morse theory \cite{Forman}. 

  The general situation is that we have a chain complex 
  ${\sf C}$ of finitely generated $R$-modules $C_i$. Write $C =
  \bigoplus_i C_i$. Assume that $C$ can be written as a direct sum of
  three $R$-modules $A$, $B$, and $U$ such that $f = \alpha \circ
  \partial$ defines an isomorphism $f: B \rightarrow A$, where
  $\alpha(a+b+u) = a$ for $a \in A, b \in B, u \in U$. 
  Let $h: A \rightarrow B$ be the inverse of $f|_B$. 
  For any chain group element $x$, define $\beta(x) = h \circ
  f(x)$ and $\hat{U} = (\text{id} -\beta)(U)$. 
  Let $\hat{U}_k$ be the component of
  $\hat{U}$ in degree $k$.

  \begin{proposition}[{\cite[Th. 4.16, Cor. 4.17]{thesis}}]
    \label{prop:dmt}
    With notation as above, we have that 
    \[
    \begin{CD}
      \hat{\mathsf{U}} : 
      \cdots @>\partial>> \hat{U}_{k+1} @>\partial>> \hat{U}_{k}
      @>\partial>> \hat{U}_{k-1} @>\partial>> \cdots 
    \end{CD}
    \]
    forms a chain complex with the same homology as 
    the original chain complex $\mathsf{C}$. 
    Moreover, for each $u \in U$, the element $\beta(u)$ is the
    unique element $b \in B$ with the property that 
    $\partial(u-b) \in B+U$.
  \end{proposition}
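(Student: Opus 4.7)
I plan to establish the three claims of Proposition~\ref{prop:dmt} in order: that $\hat{\mathsf{U}}$ is a subcomplex of $\mathsf{C}$, that it has the same homology as $\mathsf{C}$, and the uniqueness characterization of $\beta(u)$. The strategy for the first two is to realize $\mathsf{C}$ as an extension of $\hat{\mathsf{U}}$ by an explicitly contractible chain complex supported on $A \oplus B$; the uniqueness is a short direct consequence of the setup.

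Throughout, I write $\partial$ as a $3 \times 3$ block matrix $(\partial_{XY})_{X, Y \in \{A, B, U\}}$ with $\partial_{XY} : Y \to X$, so that $f = \partial_{AB}$ is an isomorphism with inverse $h$ and $\beta$ sends $a \mapsto h\partial_{AA}a$, $b \mapsto b$, $u \mapsto h\partial_{AU}u$. The identity $\partial^2 = 0$ splits into nine block relations; the ones needed are obtained by composing $h$ on the left with the $(A, B)$- and $(A, U)$-blocks, giving closed-form expressions for $\partial_{BB}h$ and $\partial_{BU}$ in terms of the remaining blocks. To prove $\partial \hat{U} \subseteq \hat{U}$, I expand $\partial(u - \beta(u))$ for $u \in U$: its $A$-component vanishes immediately from $\partial_{AB}h = \mathrm{id}_A$, and the membership of the remaining part in $\hat{U}$ is equivalent to its $B$-component being $-\beta$ of its $U$-component. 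Substituting the two expressions just derived then makes the resulting four summands cancel in pairs.

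For the homology comparison, the key device is the graded automorphism $\phi : C \to C$ that is the identity on $A \oplus B$ and sends $u \mapsto u - \beta(u)$ on $U$ (invertible because $B \cap U = 0$). The resulting decomposition $C = A \oplus B \oplus \hat{U}$ puts $\partial$ in block form whose last column is $(0, 0, \partial|_{\hat{U}})^T$, simultaneously re-confirming that $\hat{\mathsf{U}}$ is a subcomplex and identifying the quotient $\mathsf{C}/\hat{\mathsf{U}}$ with $A \oplus B$ equipped with a boundary $\partial'$ whose $AB$-block is still $f$. The candidate chain contraction on the quotient is $s(a+b) := h(a)$; the verification that $\partial' s + s \partial' = \mathrm{id}$ reduces to the same $(A, B)$-block identity of $\partial^2 = 0$ used above. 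Acyclicity of $\mathsf{C}/\hat{\mathsf{U}}$, combined with the long exact sequence of $0 \to \hat{\mathsf{U}} \to \mathsf{C} \to \mathsf{C}/\hat{\mathsf{U}} \to 0$, then yields $H_*(\hat{\mathsf{U}}) \cong H_*(\mathsf{C})$.

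For the uniqueness statement, $\partial(u - b) \in B + U$ is equivalent to $\alpha\partial(u - b) = f(u) - f(b) = 0$, and the injectivity of $f|_B$ forces $b = h(f(u)) = \beta(u)$; existence follows from $fh = \mathrm{id}_A$. The main obstacle is the first step: although the needed identities all come from $\partial^2 = 0$, one must identify precisely which blocks to use and on which side $h$ must be composed so that the telescoping cancellation really occurs. Once that is in place, both the subcomplex property and the chain contraction of the quotient flow from essentially one and the same Gaussian-elimination pattern that cancels the matched generators in $A$ and $B$.
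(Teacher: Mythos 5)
Your proof is correct, but takes a genuinely different route from the paper's at two points. For the subcomplex claim $\partial(\hat{U}) \subseteq \hat{U}$, the paper avoids the block-matrix expansion you carry out by a quick observation: having shown $\partial(u - \beta(u)) \in B + U$, it writes this element as $u_0 - b_0$ and uses that $u_0 - b_0$ is itself a cycle (being a boundary), so $f(u_0) = f(b_0)$ and hence $b_0 = \beta(u_0)$ — no cancellation of four terms required. For the homology comparison, the paper does not pass to the quotient and the long exact sequence at all; instead it establishes the $R$-module decomposition $C = \partial(B) \oplus B \oplus \hat{U}$ (rather than your $A \oplus B \oplus \hat{U}$), so that the complement $M = \partial(B) \oplus B$ is manifestly a \emph{subcomplex} (since $\partial(\partial(B)) = 0$ and $\partial(B) \subseteq M$) which is acyclic because $\partial|_B : B \to \partial(B)$ is an isomorphism. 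This gives a direct splitting of chain complexes $\mathsf{C} \cong \mathsf{M} \oplus \hat{\mathsf{U}}$ with $\mathsf{M}$ acyclic, and hence in fact a chain homotopy equivalence, a slightly stronger conclusion obtained with no recourse to the long exact sequence. Your chain contraction $s(a + b) = h(a)$ on the quotient is a valid and natural device, but requires the extra verification (which you correctly identify as the $(A, B)$-block of $\partial^2 = 0$); the paper sidesteps this by choosing $\partial(B)$ rather than $A$ as the complement to $B$. Your treatment of the uniqueness statement coincides with the paper's.
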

  \begin{proof}
    For the reader's convenience, we give a proof outline. 
    Let $u \in U$. Note that 
    \[
    f(u-\beta(u))
    = f(u) - f\circ h\circ f(u) = f(u) - f(u)
    = 0;
    \]
    hence $\partial(u - \beta(u))$ is of the form $u_0 - b_0$, where 
    $u_0 \in U$ and $b_0 \in B$. Since $u_0-b_0$ is a cycle, we have
    that $f(b_0) = f(u_0)$ and hence
    \[
    u_0 - b_0 = u_0 - h \circ f(b_0)
    = u_0 - h \circ f(u_0) = u_0 - \beta(u_0),
    \]
    which implies that $\partial(\hat{U}) \subseteq \hat{U}$.

    We claim that we may write $C$ as a direct sum $\partial(B) \oplus
    B \oplus \hat{U}$. 
    Namely, let $x \in C$, and define 
    $\hat{a} = \partial \circ h \circ \alpha(x)$. We have that 
    $\alpha(\hat{a}) = \alpha(x)$, which implies that 
    $x - \hat{a} = b+u$ for some $b \in B$ and $u \in U$. Defining 
    $\hat{b} = b + \beta(u)$ and $\hat{u} = u - \beta(u)$, we obtain 
    that we may write $x = \hat{a} + \hat{b} + \hat{u}$, where
    $\hat{a} \in \partial(B)$, $\hat{b} \in B$, and $\hat{u} \in
    \hat{U}$. It is easy to show that this decomposition of $x$ is
    unique; hence we obtain the claim.
    
    Write $M = \partial(B) \oplus B$.  
    As $\partial(M) \subseteq M$, we deduce that ${\sf C}$ splits
    into the direct sum of $\hat{\sf U}$ and  
    \[
    \begin{CD}
      \mathsf{M} : 
      \cdots @>\partial>> M_{k+1} @>\partial>> M_{k}
      @>\partial>> M_{k-1} @>\partial>> \cdots 
    \end{CD}.
    \]
    The homology of the latter complex is zero, because $\partial : B
    \rightarrow \partial(B)$ is an isomorphism. As a consequence, we
    are done. The very last statement in the proposition is immediate
    from the fact that $f : B \rightarrow A$ is an isomorphism.
  \end{proof}

  For the connection to discrete Morse theory \cite{Forman}, 
  consider a matching on the set of cells in a cell complex
  such that each pair in the matching is of the form $(\sigma,\tau)$,
  where $\sigma$ is a regular codimension one face of $\tau$. Let
  $A$ be the free $R$-module generated by cells matched with larger 
  cells, let $B$ be generated by cells matched with smaller cells,
  and let $U$ be generated by unmatched cells. Then, the map $f : B
  \rightarrow A$ is an isomorphism if the matching is a Morse matching
  \cite{Chari}, and $\hat{\sf U}$ is the Morse complex associated to
  the matching. 

  \subsection{$2$ is not a unit in $R$}   \label{se:2nonunit} 
   
  In the case that $2$ is not a unit, the discussion in
  Section~\ref{se:2unit} does not apply, as the chain complex no
  longer splits in the manner described. In fact, the situation is
  considerably more complicated. For this reason, we only examine the
  special case that ${\sf V}$ is the standard hemispherical chain
  complex of degree $D$.

  We also need some assumptions on $\mathsf{W}$. Specifically, we
  assume that we may write $W_j$ as a direct product
  $\LG{j} \times \LG{j}$, where $\LG{j}$ is a finitely generated
  $R$-module for each $j \in \Z$. Moreover, we assume that
  $\iota(w_0,w_1) = (w_1,w_0)$ for each element $(w_0,w_1) \in W_j$. 
  For our main result to hold, we must assume that $L_j = 0$
  unless $0 \le j \le D$. However, we will not actually use
  this assumption until Lemma~\ref{lem:U0}.

  We make no specific assumptions on the boundary
  operator on $\mathsf{W}$, which hence is of the general form
  \[
  \partial(w_0,w_1) = (\leftpartial(w_0)+\lefttemp(w_1),
  \rightpartial(w_0) + \righttemp(w_1)), 
  \]
  where $\leftpartial,
  \rightpartial, \lefttemp, \righttemp$ are maps 
  $\LG{j} \rightarrow \LG{j-1}$ such that
  $\partial^2 = 0$  
  and $\iota \partial = \partial \iota$. 
  Since $\iota(w,0) = (0,w)$, we have that 
  \[
  (\rightpartial(w),\leftpartial(w))
  = \iota \circ \partial(w,0)
  = \partial\circ\iota(w,0)
  =   (\lefttemp(w),\righttemp(w))
  \]
  and hence that 
  \[
  \partial(w_0,w_1) = (\leftpartial(w_0)+ \rightpartial(w_1),
  \leftpartial(w_1) + \rightpartial(w_0)). 
  \]
  For $\partial^2$ to be zero, it is necessary and 
  sufficient that $\leftpartial^2+\rightpartial^2 =
  \leftpartial\rightpartial + \rightpartial\leftpartial = 0$.

  Recall that we want to examine $(\mathsf{V} \tensor
  \mathsf{W})^+$. In this chain complex, we have for each $i$ and
  $w_0, w_1 \in \LG{j}$ the identity 
  \[
  \sigma_i^- \tensor (w_0,w_1)
  = \sigma_i^+ \tensor (w_1,w_0).
  \]
  In particular, we may identify $(V_i \tensor W_j)^+$
  with $\langle \sigma_i^+\rangle \tensor W_j \cong W_j
  \mathbf{e}_{i,j}$, 
  where $\mathbf{e}_{i,j}$ is a formal variable. Writing $\negone{} =
  -1$ for compactness, also note that 
  \begin{eqnarray*}
    \partial(\sigma_i^+ \tensor (w_0,w_1))
    &=& (\sigma_{i-1}^+ + \negone{i}\sigma_{i-1}^-) \tensor (w_0,w_1)
    + \negone{i}\sigma_i^+ \tensor \partial(w_0,w_1) \\ 
    &=& \sigma_{i-1}^+ \tensor (w_0+\negone{i}w_1,w_1+\negone{i}w_0) 
    \mbox{}+ \negone{i} \sigma_i^+ \tensor
    \partial(w_0,w_1).
  \end{eqnarray*}
  Identifying $(V_i \tensor W_j)^+$ with $W_j \mathbf{e}_{i,j}$ as
  described above, we may express this as
  \[
  \partial((w_0,w_1)\mathbf{e}_{i,j}) 
  = (w_0+\negone{i}w_1,w_1+\negone{i}w_0)\mathbf{e}_{i-1,j}
  + \negone{i} \partial(w_0,w_1)\mathbf{e}_{i,j-1}.
  \]
 
  We want to use Proposition~\ref{prop:dmt} to simplify $(\mathsf{V}
  \tensor \mathsf{W})^+$. 
  For $0 \le i \le D$ and $j \in \Z$, define
  \begin{eqnarray*}
    {\bf a}_{i,j} &=& (1,0)\mathbf{e}_{i,j},\\
    {\bf b}_{i,j} &=& (0,1)\mathbf{e}_{i,j}.
  \end{eqnarray*}
  Note that $(V_i \tensor W_j)^+ \cong 
  (\LG{j} \times \LG{j})\mathbf{e}_{i,j}$ is isomorphic to the 
  direct sum of $\LG{j} {\bf a}_{i,j}$ and $\LG{j}{\bf b}_{i,j}$.
  Define 
  \begin{table}[htb]
    \caption{The table indicates for different $i$ the groups in which
      $L_j{\bf a}_{i,j}$ and $L_j{\bf b}_{i,j}$ are contained.}
    \begin{tabular}{|c||l|l|}
      \hline
      $i$ & $L_j{\bf a}_{i,j}$ & $L_j{\bf b}_{i,j}$ \\
      \hline
      $D$   &  $U^{(D)}$ &  $B$ \\
      $D-1$ &  $A$       &  $B$ \\
      $\cdots$           &  $\cdots$ &  $\cdots$ \\
      $1$   &  $A$       &  $B$ \\
      $0$   &  $A$       &  $U^{(0)}$ \\
      \hline
    \end{tabular}
    \label{tab:ABU}
  \end{table}
  \begin{eqnarray*}
    A &=& \bigoplus_{i=0}^{D-1} \bigoplus_{j}  \LG{j}{\bf a}_{i,j},
    \\  
    B &=& \bigoplus_{i=1}^D \bigoplus_{j} \LG{j}{\bf b}_{i,j}, \\
    U^{(D)} &=& \bigoplus_{j}  U_{D+j}^{(D)}, \text{where }
    U_{D+j}^{(D)} = \LG{j}{\bf a}_{D,j}, \\
    U^{(0)} &=& \bigoplus_{j}  U_j^{(0)}, \text{where }
    U_j^{(0)} = \LG{j}{\bf b}_{0,j}.
  \end{eqnarray*}
  See Table~\ref{tab:ABU} for a schematic description. 
  Write $U = U^{(0)} \oplus U^{(D)}$. 
  Note that the direct sum of $A$, $B$, and $U$ constitutes the
  full chain complex $\left({\sf V} \tensor {\sf W}\right)^+$. 

  It is clear that we obtain an isomorphism $g : B \rightarrow A$ by
  assigning $g(w{\bf b}_{i,j}) = w{\bf a}_{i-1,j}$ for each $i$ and $j$
  and each $w \in L_j$. 
  We now show that discrete Morse theory indeed yields an isomorphism
  between $B$ and $A$, though the assignment is slightly more
  complicated.
  Let $\alpha$ be the
  projection map from $A + B + U$ to $A$ as defined in
  Section~\ref{se:dmt}, and let $f = \alpha \circ \partial$.
  \begin{lemma}
    \label{lem:fiso}
    We have that $f|_B : B \rightarrow A$ defines an isomorphism.
  \end{lemma}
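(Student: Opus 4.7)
The plan is to exhibit $f|_B$ as a triangular operator in the index $i$ with invertible diagonal entries.

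First I would compute $f(w\mathbf{b}_{i,j})$ directly from the displayed boundary formula. Using $\mathbf{b}_{i,j}=(0,1)\mathbf{e}_{i,j}$ and $\partial(0,w) = (q(w),p(w))$, expanding $\partial(w\mathbf{b}_{i,j})$ and then applying $\alpha$ (which kills the $\mathbf{b}$-terms, and, in the case $i=D$, also kills $\mathbf{a}_{D,j-1}$ since it lies in $U^{(D)}$) produces
\[
f(w\mathbf{b}_{i,j}) = \begin{cases} \epsilon^i w\,\mathbf{a}_{i-1,j} + \epsilon^i q(w)\,\mathbf{a}_{i,j-1}, & 1 \le i \le D-1, \\ \epsilon^D w\,\mathbf{a}_{D-1,j}, & i = D. \end{cases}
\]

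Next I would isolate the dominant piece. The assignment $\varphi : w\mathbf{b}_{i,j} \mapsto \epsilon^i w\,\mathbf{a}_{i-1,j}$ is a bijection between the standard generators of $B$ (indexed by $i \in \{1,\ldots,D\}$) and those of $A$ (indexed by $i-1 \in \{0,\ldots,D-1\}$), and since each $\epsilon^i = \pm 1$ is a unit in $R$, $\varphi$ is an $R$-module isomorphism. Writing $f|_B = \varphi + \psi$, the remainder $\psi$ sends $w\mathbf{b}_{i,j}$ to $\epsilon^i q(w)\,\mathbf{a}_{i,j-1}$ for $1 \le i \le D-1$ and to $0$ for $i = D$.

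The conclusion then follows from a nilpotency argument. Let $N := \varphi^{-1}\circ\psi \in \mathrm{End}(B)$. By construction $N$ strictly raises the index $i$, sending $\LG{j}\,\mathbf{b}_{i,j}$ into $\LG{j-1}\,\mathbf{b}_{i+1,j-1}$ (up to a unit) when $i<D$ and annihilating $\LG{j}\,\mathbf{b}_{D,j}$. Since $i$ ranges in the finite set $\{1,\ldots,D\}$, this yields $N^D = 0$ on all of $B$, so $\mathrm{id}_B + N$ is invertible with explicit inverse $\sum_{k=0}^{D-1}(-N)^k$. Therefore $f|_B = \varphi\circ(\mathrm{id}_B + N)$ is a composition of two isomorphisms.

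I expect no serious obstacle; the only point requiring care is the boundary case $i = D$, which is precisely where the truncation of $\mathsf{V}$ at degree $D$ enters, forcing $\psi$ to vanish on the top slice of $B$ and making the triangular structure close up without a wrap-around contribution that would otherwise spoil nilpotency.
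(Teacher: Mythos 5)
Your proof is correct and is essentially the same as the paper's: both identify the triangular structure of $f|_B$ with invertible ($\pm I$) diagonal and a $q$-correction on the subdiagonal, and the paper's explicitly displayed inverse operator matrix (with entries $\pm q^k$) is precisely your $\varphi^{-1}\circ\sum_{k=0}^{D-1}(-N)^k$ written out. Your nilpotency phrasing is a clean, slightly more abstract packaging of the same observation, and you correctly flag the $i=D$ boundary case where the truncation of $\mathsf{V}$ forces $\psi$ to vanish.
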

  \begin{proof}
    For $w \in \LG{j}$, note that 
    \begin{eqnarray*}
      \partial(w{\bf b}_{i,j})
      &=&  (\negone{i}w,w){\bf e}_{i-1,j}
      + \negone{i}(\rightpartial(w),\leftpartial(w)){\bf e}_{i,j-1}
      \\
      &=&  \negone{i}w{\bf a}_{i-1,j} + w{\bf b}_{i-1,j}
      + \negone{i}\rightpartial(w){\bf a}_{i,j-1}
      + \negone{i}\leftpartial(w){\bf b}_{i,j-1}.
    \end{eqnarray*}
    In particular, 
    \[
    f(w{\bf b}_{i,j})
    = 
    \left\{
    \begin{array}{ll}
      \negone{i}w{\bf a}_{i-1,j} & \text{if $i = D$},\\
      \negone{i}(w{\bf a}_{i-1,j} + \rightpartial(w) {\bf
      a}_{i,j-1})  &  \text{if $1\le i\le D-1$}.
    \end{array}
    \right.
    \]
    For $i=D$, the term $\rightpartial(w){\bf a}_{D,j-1}$ is
    not present, as it belongs to $U$ rather than $A$.

    Each element $x$ of degree $k$ in $B$ is of the form $x =
    \sum_{i=1}^D w_{k-i} {\bf b}_{i,k-i}$, where $w_{k-i} \in
    \LG{k-i}$.
    We may express $f(x)$ in operator matrix form as
    \[
    f(x)
    = 
    \left(
    \begin{array}{ccccccc}
      \negone{1}I & 0 & 0 & \cdots &      0 &     0 \\
      \negone{1}\rightpartial & \negone{2}I  & 0 & \cdots &      0 &     0  \\
      0 & \negone{2}\rightpartial & \negone{3}I  & \cdots &      0 &     0 \\
      \cdots     &   \cdots    & \cdots &  \cdots& \cdots& \cdots\\
      0          & 0           & 0 & \cdots
      & \negone{D-1}I &
      0 \\
      0          & 0 & 0  & \cdots &  \negone{D-1}\rightpartial & \negone{D}I
    \end{array}
    \right)
    \left(
    \begin{array}{l}
      w_{k-1}\\
      w_{k-2} \\
      w_{k-3} \\
      \cdots \\
      w_{k-D+1} \\
      w_{k-D}
    \end{array}
    \right)
    \]
    in the basis $({\bf a}_{0,k-1}, {\bf a}_{1,k-2},
    \ldots, {\bf a}_{D-3,k-D+2}, {\bf a}_{D-2,k-D+1}, {\bf
    a}_{D-1,k-D})$. 
    Now, the operator matrix is invertible; its inverse is
    \[
    \left(
    \begin{array}{llllll}
      \negone{1}I & 0 & 0 & \cdots &     0 &     0 \\
      \negone{1}\rightpartial & 
      \negone{2}I & 0 & \cdots & 0 & 0 \\
      \negone{1}\rightpartial^2 & 
      \negone{2}\rightpartial &
      \negone{3}I & \cdots
      & 0 & 0 \\ 
      \cdots     &   \cdots    & \cdots &  \cdots& \cdots& \cdots\\
      \negone{1}\rightpartial^{D-2}  & 
      \negone{2}\rightpartial^{D-3}  & 
      \negone{3}\rightpartial^{D-4}  & 
      \cdots &
      \negone{D-1}I &
      0 \\
      \negone{1}\rightpartial^{D-1} & 
      \negone{2}\rightpartial^{D-2} &
      \negone{3}\rightpartial^{D-3} & 
      \cdots & 
      \negone{D-1}\rightpartial & 
      \negone{D}I 
    \end{array}
    \right).
    \]
    Since this is true in each degree $k$, we deduce that $f|_B$ is an
    isomorphism. 
  \end{proof}

  Let $\beta$ and $\hat{\sf U}$ be defined as in
  Proposition~\ref{prop:dmt}. 
  \begin{corollary}
    \label{cor:dmt}
    We have that 
    \[
    \begin{CD}
      \hat{\mathsf{U}} : 
      \cdots @>\partial>> \hat{U}_{k+1} @>\partial>> \hat{U}_{k}
      @>\partial>> \hat{U}_{k-1} @>\partial>> \cdots 
    \end{CD}
    \]
    forms a chain complex with the same homology as 
    $(\mathsf{V} \tensor \mathsf{W})^+$. 
  \end{corollary}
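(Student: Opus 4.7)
The plan is to derive this as a direct application of Proposition~\ref{prop:dmt}. All of the hard work has already been done: the decomposition of $(\mathsf{V} \tensor \mathsf{W})^+$ into $A \oplus B \oplus U$ was set up explicitly in the paragraphs leading up to the corollary, and Lemma~\ref{lem:fiso} verified that $f|_B : B \rightarrow A$ is an isomorphism. So the proof will essentially be a one-liner that invokes the machinery.

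In more detail, I would first note that the generators $\{\mathbf{e}_{i,j} : 0 \le i \le D,\ j \in \Z\}$ of $(\mathsf{V} \tensor \mathsf{W})^+$, when split according to whether they carry a coefficient in the ``${\bf a}$'' or ``${\bf b}$'' component, are partitioned by the rows of Table~\ref{tab:ABU}. This partition matches the definition of $A$, $B$, $U^{(0)}$, and $U^{(D)}$, so we obtain the direct sum decomposition
\[
(\mathsf{V} \tensor \mathsf{W})^+ = A \oplus B \oplus U,
\]
where $U = U^{(0)} \oplus U^{(D)}$. This is the decomposition required by the hypotheses of Proposition~\ref{prop:dmt}.

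Second, I would invoke Lemma~\ref{lem:fiso} to conclude that $f = \alpha \circ \partial$ restricts to an isomorphism $B \rightarrow A$, where $\alpha$ is the projection onto $A$. With all hypotheses of Proposition~\ref{prop:dmt} now in place, the conclusion is immediate: the chain complex $\hat{\mathsf{U}}$ formed by the image of $\mathrm{id} - \beta$ on $U$ (where $\beta = h \circ f$ with $h = (f|_B)^{-1}$) has homology canonically isomorphic to that of the original complex $(\mathsf{V} \tensor \mathsf{W})^+$.

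There is no substantial obstacle here; the corollary is purely a bookkeeping step that packages Lemma~\ref{lem:fiso} with Proposition~\ref{prop:dmt}. The only minor thing to double-check is that the decomposition $A \oplus B \oplus U$ is exhaustive in every degree, which is transparent from the ranges $0 \le i \le D$ (there are no other values contributing, since $V_i = 0$ outside this range) and the split of each $(V_i \tensor W_j)^+ \cong L_j {\bf a}_{i,j} \oplus L_j {\bf b}_{i,j}$.
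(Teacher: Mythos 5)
Your proposal is correct and matches the paper's proof exactly: the paper's proof of this corollary is the one-liner ``Apply Proposition~\ref{prop:dmt} and Lemma~\ref{lem:fiso},'' and your write-up simply spells out the verification of the hypotheses (the direct sum decomposition $A \oplus B \oplus U$ and the isomorphism $f|_B : B \to A$) that the paper leaves implicit.
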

  \begin{proof}
    Apply Proposition~\ref{prop:dmt} and Lemma~\ref{lem:fiso}.
  \end{proof}

  Write $\hat{u} = u - \beta(u)$, $\hat{U}_k^{(D)} = (\text{id} -
  \beta)(U_k^{(D)})$, 
  and $\hat{U}_k^{(0)} = (\text{id} - \beta)(U_k^{(0)})$.
  Moreover, define ${\bf \hat{a}}_{D,j} = {\bf a}_{D,j} +
  \negone{D+1}{\bf b}_{D,j} = (1,\negone{D+1}){\bf e}_{D,j}$.
  \begin{lemma}
    \label{lem:Un}
    Consider an element in $U^{(D)}$ of the form
    $u = w {\bf a}_{D,j}$, where $w \in \LG{j}$.
    Then, we have that $\hat{u} 
    = w {\bf \hat{a}}_{D,j}$ and
    \[
    \partial(w {\bf \hat{a}}_{D,j}) =
    \negone{D}(\leftpartial(w)+\negone{D+1}\rightpartial(w)){\bf
    \hat{a}}_{D,j-1}.
    \]
    In particular, the groups $\hat{U}_j^{(D)}$ constitute a
    subcomplex $\hat{\sf U}^{(D)}$ of $\hat{\sf U}$, and 
    \[
    H_*(\hat{\sf U}^{(D)}) \cong
    \left\{
    \begin{array}{ll}
      H_{*-D}({\sf W}^+) & \text{if $D$ is odd},\\
      H_{*-D}({\sf W}^-) & \text{if $D$ is even}.
    \end{array}
    \right.
    \]
  \end{lemma}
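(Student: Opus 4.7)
The plan is to use the uniqueness statement at the end of Proposition~\ref{prop:dmt} to pin down $\hat{u}$, then verify the boundary formula by a direct computation, and finally identify the resulting chain complex with a shifted copy of $\mathsf{W}^+$ or $\mathsf{W}^-$.

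For the first step, I would expand $\partial(w{\bf a}_{D,j})$ using the chain-rule formula displayed just before the lemma, obtaining
\[
w{\bf a}_{D-1,j} + \epsilon^D w{\bf b}_{D-1,j} + \epsilon^D p(w){\bf a}_{D,j-1} + \epsilon^D q(w){\bf b}_{D,j-1},
\]
of which only $w{\bf a}_{D-1,j}$ lies in $A$. Choosing $b = -\epsilon^{D+1}w{\bf b}_{D,j}\in B$, a parallel expansion shows that the $A$-component of $\partial(b)$ equals $-\epsilon^{2D+1}w{\bf a}_{D-1,j} = w{\bf a}_{D-1,j}$, so $\partial(u-b)\in B+U$. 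By the uniqueness part of Proposition~\ref{prop:dmt}, $\beta(u) = -\epsilon^{D+1}w{\bf b}_{D,j}$ and hence $\hat{u} = w(1,\epsilon^{D+1}){\bf e}_{D,j} = w{\bf \hat{a}}_{D,j}$, as claimed. Expanding $\partial(w{\bf \hat{a}}_{D,j}) = \partial(w{\bf a}_{D,j}) + \epsilon^{D+1}\partial(w{\bf b}_{D,j})$ by linearity, the ${\bf a}_{D-1,j}$ contributions cancel by construction and the ${\bf b}_{D-1,j}$ contributions cancel via $\epsilon^D + \epsilon^{D+1} = 0$, leaving precisely $\epsilon^D(p(w) + \epsilon^{D+1}q(w)){\bf \hat{a}}_{D,j-1}$. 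This establishes the boundary formula and simultaneously shows that $\hat{\mathsf{U}}^{(D)}$ is closed under $\partial$, hence a subcomplex.

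For the homology, I would set up isomorphisms $W_j^+ \cong L_j$ via $[(w_0,w_1)]\mapsto w_0+w_1$ and $W_j^-\cong L_j$ via $[(w_0,w_1)]\mapsto w_0-w_1$; under these identifications the induced boundary of $\mathsf{W}$ becomes $p+q$ and $p-q$ respectively. The assignment $w{\bf \hat{a}}_{D,j}\mapsto w$ then identifies $\hat{\mathsf{U}}^{(D)}$ as a graded module with a degree-$D$ shift of $L_\bullet$, and its boundary $\epsilon^D(p+\epsilon^{D+1}q)$ specializes to $-(p+q)$ when $D$ is odd and to $p-q$ when $D$ is even. Since negating the boundary map preserves homology, this yields $H_k(\hat{\mathsf{U}}^{(D)}) \cong H_{k-D}(\mathsf{W}^+)$ for odd $D$ and $H_k(\hat{\mathsf{U}}^{(D)}) \cong H_{k-D}(\mathsf{W}^-)$ for even $D$. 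The only real care required is the sign bookkeeping, driven throughout by $\epsilon = -1$, $\epsilon^{D+1} = -\epsilon^D$ and $\epsilon^{2D+1} = \epsilon$, so no serious obstacle arises.
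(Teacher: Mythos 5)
Your proposal is correct and follows essentially the same route as the paper: compute $\partial(w\mathbf{a}_{D,j})$, observe that adding $\epsilon^{D+1}w\mathbf{b}_{D,j}$ cancels the $A$-component so that the uniqueness clause of Proposition~\ref{prop:dmt} pins down $\beta(u)$, verify the boundary formula by direct sign bookkeeping, and identify $\hat{\mathsf{U}}^{(D)}$ with a degree-$D$ shift of $\mathsf{W}^{\pm}$. The paper compresses the first two steps into ``a straightforward computation'' but the underlying argument — including the identifications $W_j^{\pm}\cong L_j$ with boundary $p\pm q$ and the observation that an overall sign on the boundary is harmless — is the same.
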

  \begin{proof}
    The formula for $\partial(w {\bf \hat{a}}_{D,j})$ is 
    just a straightforward computation. By Proposition~\ref{prop:dmt}, 
    it follows that $\hat{u} = w {\bf \hat{a}}_{D,j}$.

    For the last statement, we may identify $\mathsf{W}^+$ with the
    chain complex with chain 
    groups $\LG{j}$ and with the boundary map given by
    \[
    \partial(w) = \leftpartial(w)+\rightpartial(w).
    \]
    Similarly, we may identify $\mathsf{W}^-$ with the chain complex
    with chain groups $\LG{j}$ and with the boundary map given by
    \[
    \partial(w) = \leftpartial(w)-\rightpartial(w).
    \]
    Up to a shift in degree by $D$, the chain groups and the boundary 
    map of $\hat{\sf U}^{(D)}$ are isomorphic to those of either
    $\mathsf{W}^+$ or $\mathsf{W}^-$, depending on the parity of $D$.
    As a consequence, we obtain the statement.
  \end{proof}

  For $a \in A$, $b \in B$, and $u \in U$, write $\gamma(a+b+u) = u$.
  Recall the assumption that $L_j = 0$ unless $0 \le j \le D$. 
  \begin{lemma}
    \label{lem:U0}
    Let $0 \le k \le D$ and let $u = w{\bf
    b}_{0,k}$, where $w \in \LG{k}$. 
    Then,
    \begin{equation}
      \hat{u} = \varphi(w{\bf b}_{0,k}) := \sum_{i=0}^D 
      \rightpartial^i(w){\bf b}_{i,k-i}
      \label{eq:hatu0}
    \end{equation}
    and
    \begin{equation}
      \partial(\varphi(w{\bf b}_{0,k}))
      =
      \varphi((\leftpartial(w)+\rightpartial(w)){\bf
        b}_{0,k-1}).
      \label{eq:hatu0b}
    \end{equation}
    In particular, the groups $\hat{U}_j^{(0)}$ constitute a
    subcomplex $\hat{\sf U}^{(0)}$ of $\hat{\sf U}$, and 
    \[
    H_*(\hat{\sf U}^{(0)}) \cong
      H_{*}({\sf W}^+).
    \]
  \end{lemma}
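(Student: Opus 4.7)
The plan is to pin down $\hat u$ via the uniqueness statement at the end of Proposition~\ref{prop:dmt}: for $u = w{\bf b}_{0,k}$, I will show that $\varphi(w{\bf b}_{0,k})$ is the unique element of $u - B$ whose boundary lies in $B + U$. The candidate $\varphi(w{\bf b}_{0,k}) = \sum_{i=0}^D \rightpartial^i(w){\bf b}_{i,k-i}$ is essentially forced by the inverse matrix for $f|_B$ displayed in the proof of Lemma~\ref{lem:fiso}: its first column is $-(I,\rightpartial,\rightpartial^2,\ldots,\rightpartial^{D-1})^T$, and applying this to $\alpha\circ\partial(w{\bf b}_{0,k}) = \rightpartial(w){\bf a}_{0,k-1}$ delivers $\beta(u) = -\sum_{i=1}^D \rightpartial^i(w){\bf b}_{i,k-i}$, hence $\hat u = u - \beta(u) = \varphi(w{\bf b}_{0,k})$.

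For the boundary formula (\ref{eq:hatu0b}), which is the computational heart, I would expand $\partial(\varphi(w{\bf b}_{0,k}))$ term by term using
\[
\partial(w{\bf b}_{i,j}) = \negone{i}w{\bf a}_{i-1,j} + w{\bf b}_{i-1,j} + \negone{i}\rightpartial(w){\bf a}_{i,j-1} + \negone{i}\leftpartial(w){\bf b}_{i,j-1}
\]
(derived in the proof of Lemma~\ref{lem:fiso}; at $i=0$ the ${\bf a}_{-1,\ast}$ and ${\bf b}_{-1,\ast}$ contributions are absent). I expect the ${\bf a}$-contributions to telescope: for each $0 \le m \le D-1$ the coefficient of ${\bf a}_{m,k-m-1}$ picks up $\negone{m}\rightpartial^{m+1}(w)$ from the $i=m$ summand and $\negone{m+1}\rightpartial^{m+1}(w)$ from the $i=m+1$ summand, and these cancel. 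The leftover ${\bf a}_{D,k-D-1}$ contribution vanishes because the hypothesis $\LG{j}=0$ for $j<0$ forces $\rightpartial^{D+1}(w)\in \LG{k-D-1}=0$; this is the only place that hypothesis enters. The ${\bf b}$-contributions, after rewriting $\negone{m}\leftpartial\rightpartial^m = \rightpartial^m\leftpartial$ via iteration of the anticommutation $\leftpartial\rightpartial = -\rightpartial\leftpartial$, collect into $\sum_{m=0}^{D-1}\rightpartial^m(\leftpartial(w)+\rightpartial(w)){\bf b}_{m,k-m-1}$, which equals $\varphi((\leftpartial(w)+\rightpartial(w)){\bf b}_{0,k-1})$ since the missing $m=D$ summand again lies in $\LG{k-D-1}=0$.

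The formula immediately implies that $\hat{U}^{(0)}=\bigoplus_k \hat{U}_k^{(0)}$ is closed under $\partial$, so it is a subcomplex of $\hat{\sf U}$. Moreover, the $R$-linear map $\LG{k}\to\hat{U}_k^{(0)}$ sending $w\mapsto \varphi(w{\bf b}_{0,k})$ is an isomorphism (its inverse just reads off the $m=0$ component) and intertwines the boundary on $\hat{\sf U}^{(0)}$ with the operator $w\mapsto \leftpartial(w)+\rightpartial(w)$, which is precisely the boundary on $\mathsf{W}^+$ as identified at the end of the proof of Lemma~\ref{lem:Un}. Hence $\hat{\sf U}^{(0)}\cong \mathsf{W}^+$ as chain complexes, yielding the asserted homology isomorphism.

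The main obstacle I anticipate is bookkeeping: tracking the signs $\negone{i}$ across the telescoping of ${\bf a}$-terms and correctly invoking $\leftpartial\rightpartial+\rightpartial\leftpartial=0$ to reorganize the ${\bf b}$-contributions into $\varphi$-form. The edge cases $i=0$ and $i=D$ each need a brief separate check, but the former is handled by the absent ${\bf e}_{-1,\ast}$ contribution and the latter by the dimension bound $\LG{j}=0$ for $j<0$.
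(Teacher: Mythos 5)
Your proposal is correct, and the central computation — the telescoping cancellation of the ${\bf a}$-terms and the iterated anticommutation reorganizing the ${\bf b}$-terms, with $\rightpartial^{D+1}(w)$ and $\leftpartial\rightpartial^D(w)$ killed because they land in $\LG{k-D-1}=0$ — is exactly the one the paper carries out. The one organizational difference is in how you pin down (\ref{eq:hatu0}). You read off $\beta(u)$ directly from the first column of the explicit inverse matrix in Lemma~\ref{lem:fiso}, obtaining $\beta(u) = -\sum_{i=1}^D \rightpartial^i(w){\bf b}_{i,k-i}$ and hence $\hat u = \varphi(w{\bf b}_{0,k})$ at the outset. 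The paper instead proves the boundary formula (\ref{eq:hatu0b}) first, notes that the right-hand side lies in $B+U^{(0)}$, and then invokes the uniqueness clause at the end of Proposition~\ref{prop:dmt} to conclude (\ref{eq:hatu0}); this is slightly more economical, since the single boundary computation delivers both claims, whereas your plan does two (light) computations. Conversely, your route makes $\beta$ visible in closed form, which some readers may find clarifying. Your final observations — that (\ref{eq:hatu0b}) makes $\hat{\sf U}^{(0)}$ a subcomplex and that $w \mapsto \varphi(w{\bf b}_{0,k})$ is a chain isomorphism onto $\mathsf{W}^+$ — coincide with the paper's closing paragraph.
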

  \begin{proof}
    The boundary of the right-hand side of 
    (\ref{eq:hatu0}) equals
    \begin{eqnarray*}
      & & \sum_{i=1}^D 
      \rightpartial^i(w)(\negone{i}{\bf a}_{i-1,k-i}+{\bf b}_{i-1,k-i})
      \\
      &&  \mbox{} +
      \sum_{i=0}^{D}
      \negone{i}\rightpartial^{i+1}(w){\bf a}_{i,k-i-1} 
      + 
      \sum_{i=0}^{D} 
      \negone{i}
      \leftpartial\rightpartial^i(w) {\bf b}_{i,k-i-1} 
      \\
      &=& 
      \sum_{i=0}^{D-1} 
      \negone{i}
      (\leftpartial\rightpartial^i(w)
      +\negone{i}\rightpartial^{i+1}(w)) {\bf b}_{i,k-i-1} 
      \\
      && + \mbox{} \negone{D} \rightpartial^{D+1}(w){\bf a}_{D,k-D-1}
      + \negone{D} \leftpartial\rightpartial^D(w) {\bf b}_{D,k-D-1}
      \\
      &=& 
      \sum_{i=0}^{D-1} 
      \negone{i}
      (\leftpartial\rightpartial^i(w)
      +\negone{i}\rightpartial^{i+1}(w)) {\bf b}_{i,k-i-1}
      \\
      &=& 
      \sum_{i=0}^{D-1} 
      \rightpartial^i(\leftpartial(w)+\rightpartial(w)) {\bf
      b}_{i,k-i-1}
      = \varphi((\leftpartial(w)+\rightpartial(w)){\bf b}_{0,k-1}).
    \end{eqnarray*}
    The second equality is because $k\le D$ and hence
    $\leftpartial\rightpartial^D(w) = \rightpartial^{D+1}(w) = 0$.
    The third equality follows from repeated application of the
    identity $\leftpartial\rightpartial = -
    \rightpartial\leftpartial$. 
    This yields (\ref{eq:hatu0b}). Since the element in the right-hand
    side of (\ref{eq:hatu0b}) is an element in $B+U^{(0)}$, 
    Proposition~\ref{prop:dmt} yields (\ref{eq:hatu0}).
    
    For the final claim in the lemma, note that
    the chain groups and the boundary map of $\hat{U}^{(0)}$ are
    isomorphic to those of $\hat{\sf W}^+$.
  \end{proof}

  Without the assumption that $\LG{j} = 0$ unless $0 \le j \le D$, 
  (\ref{eq:hatu0b}) would not necessarily be true, and
  the groups $\hat{U}_j^{(0)}$ might not constitute a subcomplex of
  $\hat{\sf U}$. 
  Namely, the term $\rightpartial^{D+1}(w){\bf
    a}_{D,k-D-1} \in U^{(D)}$ in the expansion of $\partial(\hat{u})$
  is nonzero if $\rightpartial^{D+1}(w)$ is nonzero.
  
  \begin{corollary}
    \label{cor:Usplit}
    $\hat{\mathsf{U}}$ splits into two complexes
    \[
    \begin{CD}
      \hat{\mathsf{U}}^{(D)} : 
      \hat{U}^{(D)}_{2D} @>\partial>>
      \hat{U}^{(D)}_{2D-1} @>\partial>> 
      \cdots @> \partial>>
      \hat{U}^{(D)}_{D+1} @>\partial>> 
      \hat{U}^{(D)}_{D},\\
      \hat{\mathsf{U}}^{(0)} : 
      \hat{U}^{(0)}_{D} @>\partial>>
      \hat{U}^{(0)}_{D-1} @>\partial>> 
      \cdots @> \partial>>
      \hat{U}^{(0)}_{1} @>\partial>>
      \hat{U}^{(0)}_{0}.
    \end{CD}
    \]
  \end{corollary}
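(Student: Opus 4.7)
The plan is to view the corollary as an essentially formal consequence of the two preceding lemmas, with only a little bookkeeping needed to confirm the direct-sum decomposition and to pin down the degree ranges.

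First I would observe that, by construction, $U = U^{(0)} \oplus U^{(D)}$ as $R$-modules, and since $\mathrm{id} - \beta$ is $R$-linear, $\hat{U} = \hat{U}^{(0)} + \hat{U}^{(D)}$. To see that this sum is direct, I would use the explicit formulas from Lemmas \ref{lem:U0} and \ref{lem:Un}: elements of $\hat{U}^{(0)}$ are $R$-linear combinations of the $\varphi(w\mathbf{b}_{0,k}) = \sum_i \rightpartial^i(w)\mathbf{b}_{i,k-i}$, which involve only $\mathbf{b}$-generators, while elements of $\hat{U}^{(D)}$ are combinations of $w\mathbf{\hat a}_{D,j} = w\mathbf{a}_{D,j} + \negone{D+1}w\mathbf{b}_{D,j}$, whose $\mathbf{a}_{D,\cdot}$-components detect them uniquely. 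Hence $\hat{U}^{(0)} \cap \hat{U}^{(D)} = 0$.

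Next I would invoke Lemma \ref{lem:Un} to get $\partial(\hat{U}^{(D)}) \subseteq \hat{U}^{(D)}$ and Lemma \ref{lem:U0} (which uses the standing hypothesis $\LG{j}=0$ outside $0 \le j \le D$) to get $\partial(\hat{U}^{(0)}) \subseteq \hat{U}^{(0)}$. Combined with the direct-sum decomposition, this means $\hat{\mathsf{U}}$ splits as the direct sum of chain complexes $\hat{\mathsf{U}}^{(0)}$ and $\hat{\mathsf{U}}^{(D)}$.

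Finally I would record the degree ranges. Since $\hat{U}^{(D)}_{D+j} = \LG{j}\mathbf{\hat a}_{D,j}$ is nonzero only for $0 \le j \le D$, the complex $\hat{\mathsf{U}}^{(D)}$ lives in degrees $D,\dots,2D$; since $\hat{U}^{(0)}_j$ is generated by the $\varphi(w\mathbf{b}_{0,j})$ for $w \in \LG{j}$ and is nonzero only for $0 \le j \le D$, the complex $\hat{\mathsf{U}}^{(0)}$ lives in degrees $0,\dots,D$. This is precisely the statement of the corollary.

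The main (and quite mild) obstacle is simply making sure the direct-sum assertion is justified rather than assumed; once one notes that the $\mathbf{a}_{D,\cdot}$-coordinate separates the two summands, everything else is immediate from the lemmas.
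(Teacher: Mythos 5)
Your proposal is correct and takes essentially the same route as the paper, which simply declares the corollary an immediate consequence of Lemmas~\ref{lem:Un} and \ref{lem:U0}; you have merely spelled out the bookkeeping (directness of the sum, stability under $\partial$, degree ranges) that the paper leaves to the reader.

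One minor simplification worth noting: the directness of $\hat{U} = \hat{U}^{(0)} \oplus \hat{U}^{(D)}$ is already guaranteed by the general setup of Proposition~\ref{prop:dmt} without looking at the explicit formulas. Since $\beta(u) \in B$ for every $u \in U$, the map $u \mapsto \hat{u} = u - \beta(u)$ is a bijection $U \to \hat{U}$ (one recovers $u$ by projecting $\hat{u}$ onto the $U$-summand of $A \oplus B \oplus U$), so $\hat{U} \cong U = U^{(0)} \oplus U^{(D)}$ as $R$-modules automatically. Your argument via the $\mathbf{a}_{D,\cdot}$-coordinate is also valid, just slightly more work than needed. The genuinely substantive parts — that $\partial$ preserves each summand, via Lemmas~\ref{lem:Un} and \ref{lem:U0}, the latter crucially using the hypothesis $L_j = 0$ for $j \notin [0,D]$ — are exactly where the content lies, and you correctly identify and invoke them.
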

  \begin{proof}
    This is a an immediate consequence of Lemmas~\ref{lem:Un}
    and \ref{lem:U0}. 
  \end{proof}

  Applying Lemmas~\ref{lem:Un} and \ref{lem:U0} and using
  Corollaries~\ref{cor:dmt} and \ref{cor:Usplit}, we obtain 
  a description of the homology of $\left(\mathsf{V} \tensor
  \mathsf{W}\right)^+$ in terms of $\mathsf{W}^+$
  and $\mathsf{W}^-$.
  \begin{theorem}
    \label{th:Vhemi}
    If $D$ is odd, then
    \begin{eqnarray*}
    H_i(\left(\mathsf{V} \tensor \mathsf{W}\right)^+)
    &\cong& 
    H_i(\mathsf{W}^+) \oplus H_{i-D}(\mathsf{W}^+)\\
    &\cong& 
    \left\{
    \begin{array}{ll}
      H_i(\mathsf{W}^+)
      & \text{if $0 \le i \le D-1$}, \\
      H_D(\mathsf{W}^+)
      \oplus 
      H_0(\mathsf{W}^+)
      & \text{if $i = D$}, \\
      H_{i-D}(\mathsf{W}^+)
      & \text{if $D+1 \le i \le 2D$},\\
      0
      & \text{otherwise.}
    \end{array}
    \right.
    \end{eqnarray*}
    If $D$ is even, then
    \begin{eqnarray*}
      H_i(\left(\mathsf{V} \tensor \mathsf{W}\right)^+)
      &\cong& 
      H_i(\mathsf{W}^+) \oplus H_{i-D}(\mathsf{W}^-)\\
      &\cong& 
      \left\{
      \begin{array}{ll}
        H_i(\mathsf{W}^+)
        & \text{if $0 \le i \le D-1$}, \\
        H_D(\mathsf{W}^+)
        \oplus 
        H_0(\mathsf{W}^-)
        & \text{if $i = D$}, \\
        H_{i-D}(\mathsf{W}^-)
        & \text{if $D+1 \le i \le 2D$},\\
        0
        & \text{otherwise.}
      \end{array}
      \right.
    \end{eqnarray*}
  \end{theorem}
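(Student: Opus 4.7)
The plan is simply to assemble the tools already established in the preceding lemmas and corollaries; no new ideas are required. By Corollary~\ref{cor:dmt} the homology of $(\mathsf{V} \tensor \mathsf{W})^+$ agrees with that of $\hat{\mathsf{U}}$, and by Corollary~\ref{cor:Usplit} the complex $\hat{\mathsf{U}}$ decomposes as a direct sum $\hat{\mathsf{U}}^{(0)} \oplus \hat{\mathsf{U}}^{(D)}$. Consequently
\[
H_i\bigl((\mathsf{V} \tensor \mathsf{W})^+\bigr) \cong H_i(\hat{\mathsf{U}}^{(0)}) \oplus H_i(\hat{\mathsf{U}}^{(D)}),
\]
and it suffices to identify the two summands on the right.

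Next I would apply Lemmas~\ref{lem:Un} and~\ref{lem:U0}. Lemma~\ref{lem:U0} gives $H_i(\hat{\mathsf{U}}^{(0)}) \cong H_i(\mathsf{W}^+)$, and Lemma~\ref{lem:Un} gives $H_i(\hat{\mathsf{U}}^{(D)}) \cong H_{i-D}(\mathsf{W}^+)$ if $D$ is odd and $\cong H_{i-D}(\mathsf{W}^-)$ if $D$ is even. Plugging these into the above displayed isomorphism yields the first isomorphism in each of the two cases of the theorem.

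Finally I would verify the refined degree breakdown. Because of the standing assumption $\LG{j} = 0$ unless $0 \le j \le D$, the group $\hat{U}^{(0)}_k$ is nonzero only for $0 \le k \le D$, while $\hat{U}^{(D)}_k$ is nonzero only for $D \le k \le 2D$. Thus in the range $0 \le i \le D-1$ only the $\hat{\mathsf{U}}^{(0)}$-summand contributes, in the range $D+1 \le i \le 2D$ only the $\hat{\mathsf{U}}^{(D)}$-summand contributes, and the two ranges overlap precisely at $i = D$, where both $H_D(\hat{\mathsf{U}}^{(0)}) \cong H_D(\mathsf{W}^+)$ and $H_D(\hat{\mathsf{U}}^{(D)}) \cong H_0(\mathsf{W}^\pm)$ appear. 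Outside $[0,2D]$ both summands vanish.

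There is no real obstacle here: the theorem is a direct combination of the two lemmas and the two corollaries. The only care needed is the bookkeeping of the degree ranges and the separation by parity of $D$, both of which are immediate from the definitions.
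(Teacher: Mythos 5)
Your proposal is correct and takes essentially the same approach as the paper, which treats Theorem~\ref{th:Vhemi} as an immediate consequence of Corollaries~\ref{cor:dmt}, \ref{cor:Usplit} and Lemmas~\ref{lem:Un}, \ref{lem:U0}. The only thing the paper leaves implicit that you spell out is the degree-range bookkeeping, which is straightforward and correct.
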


  In the case of $B_{d,n}$, we already know the free part of the
  homology by Corollary~\ref{cor:freepart}; hence we may focus on the
  torsion part.
  \begin{corollary}
    \label{cor:torpart}
    Let $d \ge n$.
    Then, the torsion part
    $T_*(B_{d,n};\Z)$ 
    of
    $H_*(B_{d,n};\Z)$ is an
    elementary $2$-group satisfying
    \[
    T_i(B_{d,n};\Z)
    \cong 
    \left\{
    \begin{array}{ll}
      \Z_2
      & \text{if $1 \le i \le n-3$ and $i$ is odd}, \\
      \Z_2 & \text{if $d-2 \le i \le d+n-5$ and $i$ is even}, \\
      0 & \text{otherwise}.
      \end{array}
    \right.
    \]
  \end{corollary}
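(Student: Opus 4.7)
The plan is to apply Theorem~\ref{th:Vhemi} with $R = \Z$, taking $\mathsf{V}$ and $\mathsf{W}$ to be the standard hemispherical chain complexes of degrees $D = d-2$ and $N = n-2$, respectively. The opening lemma of Section~\ref{se:homology} identifies $H_i((\mathsf{V}\tensor\mathsf{W})^+)$ with the unreduced integral homology of $B_{d,n}$, and Corollary~\ref{cor:freepart} already pins down its free part, so only the torsion subgroups remain to be computed. Theorem~\ref{th:Vhemi} expresses each $H_i((\mathsf{V}\tensor\mathsf{W})^+)$ as a direct sum of terms of the form $H_j(\mathsf{W}^{\pm})$, whose torsion is given explicitly by Lemma~\ref{lem:proj}: every $\Z_2$ in $H_*(\mathsf{W}^+)$ sits in an odd degree $1 \le j \le N-1$, and every $\Z_2$ in $H_*(\mathsf{W}^-)$ sits in an even degree $0 \le j \le N-1$.

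I would then split on the parity of $D$. For odd $D$, Theorem~\ref{th:Vhemi} gives $H_i((\mathsf{V}\tensor\mathsf{W})^+) \cong H_i(\mathsf{W}^+) \oplus H_{i-D}(\mathsf{W}^+)$, with an additional free $H_0(\mathsf{W}^+) \cong \Z$ at $i = D$ that contributes no torsion. The first summand gives $\Z_2$ in odd degrees $1 \le i \le n-3$; the second, after the index shift, gives $\Z_2$ when $i - D$ is odd (so $i$ is even, as $D$ is odd) and $1 \le i - D \le n-3$, which reads as ``$i$ even and $d-2 \le i \le d+n-5$'' once one uses that $d-2$ is odd. For even $D$, the formula is $H_i((\mathsf{V}\tensor\mathsf{W})^+) \cong H_i(\mathsf{W}^+) \oplus H_{i-D}(\mathsf{W}^-)$, and the boundary degree $i = D$ now does produce a torsion summand through $H_0(\mathsf{W}^-) \cong \Z_2$. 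The first summand again contributes $\Z_2$ in odd degrees $1 \le i \le n-3$, while the second contributes $\Z_2$ whenever $i - D$ is even (so $i$ even) and $0 \le i - D \le N-1$, giving exactly the even degrees $d-2 \le i \le d+n-5$.

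In either parity, the two torsion families assemble into the description asserted in the corollary, and no torsion appears outside these ranges because all other $H_j(\mathsf{W}^{\pm})$ are either zero or free. The main piece of bookkeeping is to confirm that the hypothesis $d \ge n$ (equivalently $D \ge N$) forces the two ranges to be strictly disjoint, so that no summand is accidentally doubled or cancelled: concretely, $n - 3 < d - 2$ whenever $d \ge n$, so the odd-degree torsion always lies strictly below the even-degree torsion. I expect this range-separation check, together with the verification at the exceptional degree $i = D$ (where the correction is either purely free when $D$ is odd, or adds exactly one $\Z_2$ when $D$ is even), to be the only routine but non-automatic obstacle.
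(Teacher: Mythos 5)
Your proposal is correct and follows the same route as the paper: apply Theorem~\ref{th:Vhemi} with $\mathsf{W}$ the hemispherical complex of degree $N=n-2$, then read off the torsion from Lemma~\ref{lem:proj}, splitting on the parity of $D=d-2$; the parity bookkeeping and the treatment of the boundary degree $i=D$ are all handled correctly.

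One small misattribution worth flagging: you cast the hypothesis $d \ge n$ as needed to keep the two torsion ranges disjoint. In fact the two families are already distinguished by parity (one lives in odd degrees, the other in even degrees), so no overlap can occur regardless. The real reason $d \ge n$ is required is that Theorem~\ref{th:Vhemi} (via Lemma~\ref{lem:U0}) demands $\LG{j}=0$ unless $0\le j\le D$, i.e.\ that $\mathsf{W}$ be concentrated in degrees at most $D$; for the hemispherical complex of degree $N$ this is exactly $N\le D$, i.e.\ $n\le d$. Without that assumption the discrete Morse complex $\hat{\mathsf{U}}$ would not split as in Corollary~\ref{cor:Usplit} and the formula for $H_i((\mathsf{V}\tensor\mathsf{W})^+)$ would not hold.
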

  \begin{proof}
    This is an immediate consequence of Lemma~\ref{lem:proj}
    and Theorem~\ref{th:Vhemi}; let 
    $\mathsf{W}$ be the standard hemispherical 
    cell decomposition of the $(n-2)$-sphere.
  \end{proof}

  Combining Corollaries~\ref{cor:freepart} and \ref{cor:torpart}, 
  we obtain Theorem~\ref{th:homology}.

\end{document}